\newtheorem{lemma}{Lemma}
\newtheorem{theorem}{Theorem}
\newcommand{\bs}{ \boldsymbol }
\newcommand{\mb}{ \mathbf }
\begin{document}

\title{The Constrained Maximum Likelihood Estimation For Parameters Arising From Partially Identified Models}

\author{Hao Luo\footnote{corresponding author: hao.luo@stat.ubc.ca}, Alexandre Bouchard-C\^{o}t\'{e}, Gabriela Cohen Freue, Paul Gustafson \\[4pt]
University of British Columbia, Canda}
\date{}
\maketitle \abstract{We extend the constrained maximum likelihood estimation theory for parameters of a completely identified model, proposed by Aitchison and Silvey (1958), to parameters arising from a partially identified model. With a partially identified model, some parameters of the model may only be identified through constraints imposed by additional assumptions. We show that, under certain conditions, the constrained maximum likelihood estimator exists and locally maximize the likelihood function subject to constraints. We then study the asymptotic distribution of the estimator and propose a numerical algorithm for estimating parameters. We also discuss a special situation where exploiting additional assumptions does not improve estimation efficiency. \\
\ \\
\noindent Keywords: Constrained maximum likelihood estimation; Partially identified models; Lagrange multiplier method; Local maximum. \\
\ \\
\noindent MSC2010: 62F12 (Primary); 
}

\section{Introduction}

In some scientific studies, due to constraints of logistics and/or resources, data are not collected in the ideal way. Consequently, the available data may only partially identify the statistical model under consideration, i.e., parameters of the statistical model are identified up to a set of possible values instead of just one single value. The set of parameter values that correspond to the same distribution of observables is usually termed the identification region.~\citet{Manski2003} gives an overview of partial identification and covers many scenarios where partial identification may arise. 

Of course, point-identification is preferred as it is fundamental for consistent point estimation and ensures many nice properties of model-based parameter estimators. With a partially identified model, when possible one may impose some reasonable assumptions to achieve point-identification. Under such assumptions, the parameter vector will be restricted to a subset of the original parameter space. If this constrained parameter space has only a single point of intersection with the identification region, then the parameter vector is uniquely identified. 

In this paper, we study the maximum-likelihood estimation of parameters arising from a partially identified model with some equality constraints introduced by additional assumptions. In particular, we consider the scenario where there exists a special re-parameterization of all parameters of the model, which is termed a transparent re-parameterization by~\citet{GustafsonGelfandSahuEtAl2005}, such that the distribution of observables is completely determined by a proper subset of parameters after transformation. 

In the situation of adding parameter constraints to a model which is identified even without the constraints,~\citet{AitchisonSilvey1958} characterized the large-sample behavior of maximum-likelihood estimators via a Lagrange multiplier approach. However, the assumption that the unconstrained version of the model is identified is embedded in their approach. Therefore, our work extends their theory to the situation that identification is only obtained via imposition of the constraints. 

Of course, point-identification is preferred as it is fundamental for consistent point estimation and ensures many nice properties of model-based parameter estimators. With a partially identified model, when possible one may impose some reasonable assumptions to achieve point-identification. Under such assumptions, the parameter vector will be restricted to a subset of the original parameter space. If this constrained parameter space has only a single point of intersection with the identification region, then the parameter vector is uniquely identified. 

The paper is organized as follows. We first introduce some general notation and give a mathematical formulation of the problem. We then prove the existence of the constrained maximum likelihood estimate and show that the estimator is asymptotically normally distributed.  A numerical algorithm for computing the constrained maximum likelihood estimate is also developed. We then use a simulation study to compare the performance of the proposed method and the general method, which does not depend on constraints, to investigate the effect of imposing additional assumptions with a partially identified model. Moreover, we comment on a special situation where there is no benefit in terms of estimation efficiency. Finally, we present some concluding thoughts.

\section{Statistical problem}
Suppose our data consist of  $n$ observations $\mb{x} = (x_1, \dots, x_n)^{T}$. The statistical model underlying the data is assumed to be initially parameterized in scientific terms via a vector of $s$ parameters. Let $\bs{\omega} = (\omega_1, \dots, \omega_s)^{T}$ be a re-parameterization of the original parameters such that the log-likelihood function $\ell$ for the observed data can be completely determined by its first $r$ elements, say $\bs{\phi} =  (\omega_1, \dots, \omega_r)^{T}$, through
$$
\ell(\mb{x}, \bs{\phi}) = \sum_{i=1}^{n} \log f(x_i, \bs{\phi}),
$$
where $f(x, \bs{\phi})$ denotes the probability density function for an individual observation $x$. The remaining  $s-r$ parameters of $\bs{\omega}$ are represented by another vector $\bs{\psi} =  (\omega_{r+1}, \dots, \omega_s)^{T}$, which cannot be learned from the observed data. Thus, $\bs{\omega} = (\bs{\phi}, \bs{\psi})$ is partially identified with the identified part $\bs{\phi}$ and the unidentified part $\bs{\psi}$. To further identify $\bs{\psi}$, we make additional assumptions that impose $t$ equality constraints on $\bs{\omega}$:
$$
\mb{h}(\bs{\omega}) = \left( \begin{array}{c} h_1(\bs{\omega}) \\ \vdots \\ h_t(\bs{\omega}) \end{array} \right) = \mb{0}.
$$
To identify $s-r$ unidentified parameters, we need at least $s-r$ equations. Also, it should be reasonable to assume that the number of constraints does not exceed the number of identified parameters, which is necessary for the development of our method. Thus, we assume that $s-r \le t \le r$. Note that the true, though unknown, parameter value $\bs{\omega}^\ast = (\omega_1^\ast, \dots, \omega_s^\ast)^{T}$ is presumed to satisfy these constraints itself, i.e., $\mb{h}(\bs{\omega}^\ast) = \mb{0}$. 

The objective is to find the constrained maximum likelihood estimate $\hat{\bs{\omega}}$ that maximizes the log-likelihood function $\ell(\mb{x}, \bs{\phi})$ subject to the condition $\mb{h}(\hat{\bs{\omega}}) = \mb{0}$. Let $\hat{\bs{\phi}}^{(u)}$ denote the unconstrained maximum likelihood estimate obtained by the general method concerning purely the log-likelihood function $\ell(\mb{x}, \bs{\phi})$. If the equation $\mb{h}( \hat{\bs{\phi}}^{(u)}, \bs{\psi} ) = \mb{0}$ with respect to $\bs{\psi}$ has a solution, say $\hat{\bs{\psi}}^{(c)} $, then $\hat{\bs{\omega}} = (\hat{\bs{\phi}}^{(u)}, \hat{ \bs{\psi}}^{(c)} )$ forms the constrained maximum likelihood estimate of the problem. This approach may fail, however, since the equation $\mb{h}( \bs{\phi}, \bs{\psi} ) = \mb{0}$ with respect to $\bs{\psi}$ may not necessarily have a solution for some values of $\bs{\phi}$. Alternatively, we propose to estimate $\bs{\omega}$ by maximizing $(1/n)\ell(\mb{x},  \bs{\phi}) + \bs{\lambda}^{T} \mb{h} (\bs{\omega})$, where $\bs{\lambda} = (\lambda_1, \dots, \lambda_t)^{T}$ is a Lagrange multiplier.  
Suppose $(\hat{\bs{\omega}}, \hat{\bs{\lambda}})$ solves the following $s+t$ equations:
\begin{align}
\frac{1}{n} \mb{s}(\mb{x},  \bs{\phi} ) + \mb{J}_{\bs{\omega}} \bs{\lambda} &= \mb{0} , \label{eq:lag:e1} \\[4pt] 
\mb{K}_{\bs{\omega}} \bs{\lambda} &= \mb{0} , \label{eq:lag:e2} \\[4pt] 
\mb{h}(\bs{\omega}) &= \mb{0}, \label{eq:lag:e3}
\end{align}
where $\mb{s}(\mb{x},  \bs{\phi} )$ is the score vector of length $r$ whose $i$-th component is $\partial \ell(\mb{x}, \bs{\phi})/\partial \omega_i$, for $i=1, \dots, r$, $\mb{J}_{\bs{\omega}}$ is the $r \times t$ matrix $\left(\partial h_j(\bs{\omega})/\partial \omega_i \right)$, for $i = 1, \dots, r$, $ j=1, \dots, t$,  and $\mb{K}_{\bs{\omega}}$ is the $(s-r) \times t$ matrix $\left( \partial h_j(\bs{\omega})/\partial \omega_{r+i} \right)$, for $i = 1, \dots, s-r$, $ j=1, \dots, t$. Then $\hat{\bs{\omega}}$ should be the constrained maximum likelihood estimate. 

\section{The constrained maximum likelihood estimation}

In this section, we will show that, under some general conditions, if $\mb{x}$ belongs to a set whose probability measure tends to 1 as $n$ approaches infinity, then the equations (\ref{eq:lag:e1}) - (\ref{eq:lag:e3}) have a solution $(\hat{\bs{\omega}}, \hat{\bs{\lambda}})$ such that $\hat{\bs{\omega}}$ is within a small neighborhood of the true value $\bs{\omega}^\ast$. This solution is proved to be the constrained maximum likelihood estimate that maximizes $\ell(\mb{x}, \bs{\phi})$ subject to $\mb{h}(\bs{\omega}) = \mb{0}$. We then extend the definition of $(\hat{\bs{\omega}}, \hat{\bs{\lambda}})$ for all $\mb{x} \in \mathbb{R}^n$, and show the asymptotic distribution of the random variable thus defined. Finally, we propose an algorithm for numerically computing $(\hat{\bs{\omega}}, \hat{\bs{\lambda}})$. The development of this section is based on the work by~\citet{AitchisonSilvey1958}. However, due to the presence of the unidentified component $\bs{\psi}$, our work is more than a simple generalization of their theory. 

We first impose some conditions on $f(x, \bs{\phi})$ and $\mb{h}(\bs{\omega})$ within some neighborhood of $\bs{\omega}^\ast$, say $U_{\alpha} = \left\{ \bs{\omega} : ||\bs{\omega} - \bs{\omega}^\ast|| \leq \alpha \right\}$.  We assume that $f(x, \bs{\phi})$ satisfies the conditions ($\mathcal{F}1$) - ($\mathcal{F}4$) as defined in~\citep{AitchisonSilvey1958}. These conditions are quite general and will be satisfied in most practical estimation problems. Here, we just write one important result implied by these conditions for later reference. If the conditions on $f(x, \bs{\phi})$ are satisfied, for any given positive numbers $\delta < \alpha$ and $\epsilon < 1$ and for sufficiently large $n \geq n(\delta, \epsilon)$, there exists a set $\mb{X}_n$ with the properties
\begin{enumerate}
\item[($\mathcal{X}1$)] $\mathrm{Pr}\{ \mb{X}_n \} > 1 - \epsilon$. 
\item[($\mathcal{X}2$)] $|| \mb{s}(\mb{x}, \bs{\phi}^\ast)/n|| < \delta^2$, if $\mb{x} \in \mb{X}_n$.
\item[($\mathcal{X}3$)] $ \left(\mb{M}_{\mb{x}, \bs{\phi}^\ast}/n\right)$ can be expressed in the form $-\mb{B}_{\bs{\phi}^\ast} + \delta \mb{m}_{\mb{x}, \bs{\phi}^\ast}$, where $\mb{M}_{\mb{x}, \bs{\phi}^\ast}$ is the matrix $(\partial^2 \ell(\mb{x}, \bs{\phi}^\ast)/\partial \omega_i \partial \omega_j)$, $i, j = 1, \dots, r$, $\mb{B}_{\bs{\phi}^\ast}$ is a certain positive definite matrix, and $\mb{m}_{\mb{x}, \bs{\phi}^\ast}$ is an $r \times r$ matrix, the moduli of whose elements are bounded by 1, if $\mb{x} \in \mb{X}_n$.
\item[($\mathcal{X}4$)] For every $\bs{\omega} \in U_{\alpha}$ there exists a constant, say $\kappa_1$, such that 
$$
\left| \frac{1}{n} \frac{\partial^3 \ell(\mb{x}, \bs{\phi})}{\partial \omega_i \partial \omega_j \partial \omega_k} \right| < 2 \kappa_1, 
$$
for all $i,j,k = 1,2,\dots,r$, if $\mb{x} \in \mb{X}_n$.

\end{enumerate}
On the other hand, some conditions are assumed for the constraint function $\mb{h}({\bs{\omega}})$ as follows.
\begin{enumerate}
\item[($\mathcal{H}1$)] For all $\bs{\omega} \in U_\alpha$, the first order partial derivatives $\partial h_k(\bs{\omega})/\partial \omega_i$, $i=1, \dots, s$, $k=1,\dots,t$, exist and they are continuous function of $\bs{\omega}$.
\item[($\mathcal{H}2$)] For all $\bs{\omega} \in U_\alpha$, the second order partial derivatives $\partial^2 h_k(\bs{\omega})/\partial \omega_i \partial \omega_j$, $i,j=1, \dots, s$, $k=1,\dots,t$, exist and $|\partial^2 h_k(\bs{\omega})/\partial \omega_i \partial \omega_j|$ is bounded by a given constant, say $2\kappa_2$, for all $i$, $j$ and $k$.
\item[($\mathcal{H}3$)]  The $r \times t$ matrix $\mb{J}_{\bs{\omega}^\ast}$ and the $(s-r)\times t$ matrix $\mb{K}_{\bs{\omega}^\ast}$ are both of full rank, i.e., $rank(\mb{J}_{\bs{\omega}^\ast}) = t$ and $rank(\mb{K}_{\bs{\omega}^\ast}) =  s-r$. 
\end{enumerate}

\subsection{Existence of the constrained maximum likelihood estimate}

We begin by establishing a necessary and sufficient condition for the existence of a solution of the equations (\ref{eq:lag:e1}) - (\ref{eq:lag:e3}) under some general conditions. It should be noted that the following lemma cannot be directly generalized from the Lemma 1 in~\citep{AitchisonSilvey1958} by simply viewing the log-likelihood function as a function of $\bs{\omega}$ and letting $\mb{B}_{\bs{\omega}^\ast}$ be the $s \times s$ matrix that naturally extends  $\mb{B}_{\bs{\phi}^\ast}$, due to the singularity of $\mb{B}_{\bs{\omega}^\ast}$ thus defined. Therefore, some modifications are required.  

\begin{lemma} \label{thm:l1}
Subject to conditions on $f$ and $\mb{h}$, if $\delta < \alpha$ and $\epsilon <1$ are some given positive numbers and if $\mb{x} \in \mb{X}_n$, then the equations (\ref{eq:lag:e1}) - (\ref{eq:lag:e3}) have a solution $(\hat{\bs{\omega}}$, $\hat{\bs{\lambda}})$ such that $\hat{\bs{\omega}} \in U_\delta$, if and only if $\hat{\bs{\omega}}$ satisfies a certain equation of the form $- \tilde{\mb{B}}_{\bs{\omega}^\ast} (\bs{\omega} - \bs{\omega}^\ast) + \delta^2 \mb{v}(\mb{x}, \bs{\omega}) = \mb{0}$, in which $\tilde{\mb{B}}_{\bs{\omega}^\ast}$ is an $s \times s$ matrix with two blocks on the diagonal being $\mb{B}_{\bs{\phi}^\ast}$ and $\mb{I}_{s-r}$, and $\mb{v}(\mb{x}, \bs{\omega})$ is a continuous function on $U_\delta$ and $||\mb{v}(\mb{x}, \bs{\omega})||$ is bounded for $\omega \in U_\delta$ by a positive number $\kappa^{\dagger}$.
\end{lemma}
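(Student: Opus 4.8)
\noindent\textit{Proof strategy.} The plan is to imitate the Lagrange-multiplier elimination behind Lemma~1 of~\citet{AitchisonSilvey1958}, but to carry it out on the full parameter vector $\bs{\omega}$ and to replace the singular ``extended second-derivative matrix'' by the nonsingular bordered matrix that the constraints supply. First I would rewrite (\ref{eq:lag:e1})--(\ref{eq:lag:e3}) as an exact linear identity in $(\bs{\omega}-\bs{\omega}^\ast,\bs{\lambda})$. Put $\delta^{2}\mb{a}(\mb{x},\bs{\omega}):=\frac{1}{n}\mb{s}(\mb{x},\bs{\phi})+\mb{B}_{\bs{\phi}^\ast}(\bs{\phi}-\bs{\phi}^\ast)$ and $\delta^{2}\mb{b}(\bs{\omega}):=\mb{h}(\bs{\omega})-\mb{H}_{\bs{\omega}^\ast}^{T}(\bs{\omega}-\bs{\omega}^\ast)$, where $\mb{H}_{\bs{\omega}^\ast}$ is the $s\times t$ Jacobian $\left(\partial h_j(\bs{\omega}^\ast)/\partial\omega_i\right)$, that is, $\mb{J}_{\bs{\omega}^\ast}$ stacked over $\mb{K}_{\bs{\omega}^\ast}$. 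Then the equations (\ref{eq:lag:e1})--(\ref{eq:lag:e3}) can be rewritten, block for block, as
\[
\mb{A}_{\bs{\omega}}\begin{pmatrix}\bs{\phi}-\bs{\phi}^\ast\\ \bs{\psi}-\bs{\psi}^\ast\\ \bs{\lambda}\end{pmatrix}=\delta^{2}\begin{pmatrix}\mb{a}\\ \mb{0}\\ -\mb{b}\end{pmatrix},\qquad \mb{A}_{\bs{\omega}}:=\begin{pmatrix}\mb{B}_{\bs{\phi}^\ast} & \mb{0} & -\mb{J}_{\bs{\omega}}\\ \mb{0} & \mb{0} & -\mb{K}_{\bs{\omega}}\\ \mb{J}_{\bs{\omega}^\ast}^{T} & \mb{K}_{\bs{\omega}^\ast}^{T} & \mb{0}\end{pmatrix}.
\]
Taylor-expanding the score and $\mb{h}$ about $\bs{\omega}^\ast$ and invoking ($\mathcal{X}2$)--($\mathcal{X}4$), ($\mathcal{H}1$)--($\mathcal{H}2$) and $\mb{h}(\bs{\omega}^\ast)=\mb{0}$ shows that $\|\mb{a}(\mb{x},\bs{\omega})\|$ and $\|\mb{b}(\bs{\omega})\|$ are bounded on $U_\delta$, uniformly over $\mb{x}\in\mb{X}_n$, by constants that do not depend on $\delta$ or $n$; and $\mb{a},\mb{b}$ are continuous in $\bs{\omega}$ because $\ell$ and $\mb{h}$ are twice continuously differentiable on $U_\alpha$.

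The heart of the argument is the invertibility of $\mb{A}_{\bs{\omega}^\ast}$. The difficulty signalled before the lemma is that the naive $s\times s$ extension of $\mb{B}_{\bs{\phi}^\ast}$ carries a trailing $(s-r)\times(s-r)$ zero block and is singular; but for the bordered matrix $\mb{A}_{\bs{\omega}^\ast}$ one checks directly that $\mb{A}_{\bs{\omega}^\ast}(\mb{u},\mb{w},\mb{v})^{T}=\mb{0}$ forces $(\mb{u},\mb{w},\mb{v})=\mb{0}$: solving the first block for $\mb{u}$, substituting it into the third block and pairing with $\mb{v}$ (the $\mb{w}$-term dropping out by the second block) gives $\mb{v}^{T}\mb{J}_{\bs{\omega}^\ast}^{T}\mb{B}_{\bs{\phi}^\ast}^{-1}\mb{J}_{\bs{\omega}^\ast}\mb{v}=0$, whence $\mb{J}_{\bs{\omega}^\ast}\mb{v}=\mb{0}$ by positive-definiteness of $\mb{B}_{\bs{\phi}^\ast}$ and $\mb{v}=\mb{0}$ by the full column rank of $\mb{J}_{\bs{\omega}^\ast}$; then $\mb{u}=\mb{0}$, and finally $\mb{K}_{\bs{\omega}^\ast}^{T}\mb{w}=\mb{0}$ forces $\mb{w}=\mb{0}$ by the full row rank of $\mb{K}_{\bs{\omega}^\ast}$. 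Both halves of ($\mathcal{H}3$) enter here, the rank condition on $\mb{K}_{\bs{\omega}^\ast}$ being precisely what compensates for the degenerate $\bs{\psi}$-block, and this is the modification that forbids simply quoting~\citet{AitchisonSilvey1958}. Since the entries of $\mb{A}_{\bs{\omega}}$ depend continuously on $\bs{\omega}$ (condition ($\mathcal{H}1$)), $\det\mb{A}_{\bs{\omega}}\neq0$ for $\bs{\omega}$ near $\bs{\omega}^\ast$; taking $\alpha$, hence $\delta<\alpha$, small enough that this holds throughout $U_\alpha$, the quantity $\sup_{\bs{\omega}\in U_\alpha}\|\mb{A}_{\bs{\omega}}^{-1}\|$ is finite by continuity and compactness.

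It then remains to solve and repackage. For $\bs{\omega}\in U_\delta\subseteq U_\alpha$, multiplying the displayed system by $\mb{A}_{\bs{\omega}}^{-1}$ shows that (\ref{eq:lag:e1})--(\ref{eq:lag:e3}) hold if and only if $(\bs{\omega}-\bs{\omega}^\ast,\bs{\lambda})^{T}=\delta^{2}\mb{A}_{\bs{\omega}}^{-1}(\mb{a},\mb{0},-\mb{b})^{T}$; reading off the first $s$ coordinates gives $\bs{\omega}-\bs{\omega}^\ast=\delta^{2}\mb{g}(\mb{x},\bs{\omega})$ with $\mb{g}$ continuous on $U_\delta$ and $\|\mb{g}\|$ bounded there, uniformly over $\mb{x}\in\mb{X}_n$, by a constant assembled from $\sup_{\bs{\omega}\in U_\alpha}\|\mb{A}_{\bs{\omega}}^{-1}\|$ and the bounds on $\mb{a}$ and $\mb{b}$. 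Multiplying by $-\tilde{\mb{B}}_{\bs{\omega}^\ast}=-\mathrm{diag}(\mb{B}_{\bs{\phi}^\ast},\mb{I}_{s-r})$ and setting $\mb{v}(\mb{x},\bs{\omega}):=\tilde{\mb{B}}_{\bs{\omega}^\ast}\mb{g}(\mb{x},\bs{\omega})$ puts the equation in the stated form $-\tilde{\mb{B}}_{\bs{\omega}^\ast}(\bs{\omega}-\bs{\omega}^\ast)+\delta^{2}\mb{v}(\mb{x},\bs{\omega})=\mb{0}$, with $\mb{v}$ continuous on $U_\delta$ and $\|\mb{v}\|\le\kappa^{\dagger}$ for a suitable $\kappa^{\dagger}$. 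The ``if'' direction holds because the elimination of $\bs{\lambda}$ is reversible: if $\hat{\bs{\omega}}\in U_\delta$ satisfies the reduced equation, then taking $\hat{\bs{\lambda}}$ to be the last $t$ coordinates of $\delta^{2}\mb{A}_{\hat{\bs{\omega}}}^{-1}(\mb{a},\mb{0},-\mb{b})^{T}$ reproduces a solution $(\hat{\bs{\omega}},\hat{\bs{\lambda}})$ of (\ref{eq:lag:e1})--(\ref{eq:lag:e3}) with $\hat{\bs{\omega}}\in U_\delta$.

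The step I expect to be the main obstacle is the invertibility of $\mb{A}_{\bs{\omega}^\ast}$: seeing that the flat directions of the degenerate second-derivative matrix are exactly spanned by the columns of $\mb{K}_{\bs{\omega}^\ast}^{T}$, so that the rank hypothesis on $\mb{K}_{\bs{\omega}^\ast}$ in ($\mathcal{H}3$) is precisely what repairs the singularity and justifies the $\mb{I}_{s-r}$ block. A secondary point requiring care is the bookkeeping that keeps $\kappa^{\dagger}$ independent of $\delta$, $n$ and $\mb{x}$, together with the (standard, tacit) assumption that $\alpha$ is small enough that $\mb{A}_{\bs{\omega}}$ remains invertible on all of $U_\alpha$.
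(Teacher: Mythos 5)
Your proposal is correct and reaches the same reduced equation, but by a genuinely different elimination. The paper works sequentially: it premultiplies the expanded score equation by $\mb{J}_{\bs{\omega}^\ast}^{T}\mb{B}_{\bs{\phi}^\ast}^{-1}$ and inverts $\mb{J}_{\bs{\omega}^\ast}^{T}\mb{B}_{\bs{\phi}^\ast}^{-1}\mb{J}_{\bs{\omega}}$ to express $\bs{\lambda}$ in terms of $\bs{\omega}$, then substitutes into $\mb{K}_{\bs{\omega}}\bs{\lambda}=\mb{0}$ and inverts $\mb{K}_{\bs{\omega}}\mb{A}_{\bs{\omega}}\mb{K}_{\bs{\omega}^\ast}^{T}$ to solve for $\bs{\psi}-\bs{\psi}^\ast$, carrying explicit remainders $\mb{v}^{(3)},\dots,\mb{v}^{(6)}$; sufficiency is then proved by substituting these expressions back into the system, which as a by-product yields the closed form $\hat{\bs{\lambda}}=-\mb{Y}_{\hat{\bs{\omega}}}\mb{s}(\mb{x},\hat{\bs{\phi}})/n$ that the paper reuses in the subsequent local-maximum argument. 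You instead package the Taylor-expanded system into a single bordered matrix and invert it once, proving nonsingularity at $\bs{\omega}^\ast$ from ($\mathcal{H}3$) together with the positive definiteness of $\mb{B}_{\bs{\phi}^\ast}$; that nonsingularity argument is in substance the paper's Lemma~\ref{thm:l2}, which the paper only introduces later for the asymptotic distribution, and your null-space computation correctly isolates how the rank condition on $\mb{K}_{\bs{\omega}^\ast}$ repairs the degeneracy that blocks a naive extension of Aitchison--Silvey. What your route buys is a nearly automatic ``if'' direction (invertibility of the bordered matrix is reversible, whereas the paper must re-verify (\ref{eq:lag:e1})--(\ref{eq:lag:e3}) by hand) and a single invertibility hypothesis in place of two Schur complements; what it does not deliver for free are the explicit formulas for $\mb{v}^{(5)}$, $\mb{v}^{(6)}$ and $\hat{\bs{\lambda}}$ used downstream, though these can be read off the block inverse. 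Two minor points: the smallness requirement belongs on $\delta$ rather than on the fixed $\alpha$ (exactly as in the paper's own treatment of $\mb{J}_{\bs{\omega}^\ast}^{T}\mb{B}_{\bs{\phi}^\ast}^{-1}\mb{J}_{\bs{\omega}}$), and your final premultiplication by $\tilde{\mb{B}}_{\bs{\omega}^\ast}$ is harmless but artificial, since in the paper the block-diagonal form arises organically from eliminating the $\bs{\phi}$- and $\bs{\psi}$-blocks at different stages.
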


\begin{proof}
We first prove the necessity of the condition. By expanding the components of $\mb{s}(\mb{x}, \bs{\phi})$ around $\bs{\phi}^\ast$ in the equation (\ref{eq:lag:e1}), and the components of $\mb{h}(\bs{\omega})$ around  $\bs{\omega}^\ast$ in the equation (\ref{eq:lag:e3}), we find that the solution of the equations (\ref{eq:lag:e1}) - (\ref{eq:lag:e3}) should also satisfy:
\begin{align}
\frac{1}{n}\left\{ \mb{s}(\mb{x}, \bs{\phi}^\ast ) + 
\mb{M}_{\mb{x},\bs{\phi}^\ast} (\bs{\phi} - \bs{\phi}^\ast) + 
\mb{v}^{(1)}(\mb{x}, \bs{\phi})\right\} + \mb{J}_{\bs{\omega}} \bs{\lambda} &= \mb{0},  \label{eq:lemma:e1} \\[4pt]
\mb{J}_{\bs{\omega}^\ast}^{T} (\bs{\phi} - \bs{\phi}^\ast) + \mb{K}_{\bs{\omega}^\ast}^{T}(\bs{\psi} - \bs{\psi}^\ast) + \mb{v}^{(2)}(\bs{\omega}) &= \mb{0}, \label{eq:lemma:e2}
\end{align}
where
\begin{itemize}
\item[(i)] $\mb{v}^{(1)}(\mb{x}, \bs{\phi})$ is a vector of length $r$ whose $m$-th component is 
$$
\frac{1}{2}(\bs{\phi} - \bs{\phi}^\ast)^{T} \mb{L}_{m} (\bs{\phi} - \bs{\phi}^\ast),
$$
where $\mb{L}_{m}$ is the matrix $(\partial^3 \ell(\mb{x}, \bs{\phi}^{(m,1)}) / \partial \omega_m \partial \omega_i \partial \omega_j)$, $i,j=1,\dots, r$, with $\bs{\phi}^{(m,1)}$ being a point such that $||\bs{\phi}^{(m,1)} - \bs{\phi}^\ast|| < ||\bs{\phi} - \bs{\phi}^\ast||$, and
\item[(ii)] $\mb{v}^{(2)}(\bs{\omega})$ is a vector of length $s$ whose $m$-th component is 
$$
\frac{1}{2}(\bs{\omega} - \bs{\omega}^\ast)^{T} \mb{H}_{m} (\bs{\omega} - \bs{\omega}^\ast),
$$
where $\mb{H}_m$ is the matrix $(\partial^2 h_m(\bs{\omega}^{(m,2)}) / \partial \omega_i \partial \omega_j)$, $i, j = 1, \dots, s$, with $\bs{\omega}^{(m,2)}$ being a point such that $||\bs{\omega}^{(m,2)} - \bs{\omega}^\ast|| < ||\bs{\omega} - \bs{\omega}^\ast||$.
\end{itemize}
Further, given property ($\mathcal{X}3$) , we can re-write the equations (\ref{eq:lemma:e1}) and (\ref{eq:lemma:e2}) in the following form:
\begin{align}
- \mb{B}_{\bs{\phi}^\ast} (\bs{\phi} - \bs{\phi}^\ast) +  \mb{J}_{\bs{\omega}} \bs{\lambda} +  \delta^2 \mb{v}^{(3)}(\mb{x}, \bs{\phi}) &= \mb{0}, \label{eq:lemma:e3} \\[4pt]
\mb{J}_{\bs{\omega}^\ast}^{T} (\bs{\phi} - \bs{\phi}^\ast) + \mb{K}_{\bs{\omega}^\ast}^{T}(\bs{\psi} - \bs{\psi}^\ast) + \delta^2 \mb{v}^{(4)}(\bs{\omega}) &= \mb{0}, \label{eq:lemma:e4}
\end{align}
where 
\begin{align}
\mb{v}^{(3)}(\mb{x}, \bs{\phi}) &= \frac{1}{n\delta^2}l(\mb{x}, \bs{\phi}^\ast) + \frac{1}{\delta} \mb{m}_{\mb{x}, \bs{\phi}^\ast} (\bs{\phi} - \bs{\phi}^ \ast) + \frac{1}{n\delta^2} \mb{v}^{(1)}(\mb{x}, \bs{\phi}), \label{eq:lemma:nu3} \\[4pt]
\mb{v}^{(4)}(\bs{\omega}) &= \frac{1}{\delta^2}\mb{v}^{(2)}(\bs{\omega}).  \label{eq:lemma:nu4}
\end{align}
Moreover, by properties ($\mathcal{X}2$) - ($\mathcal{X}4$), we obtain a bound for $\mb{v}^{(3)}(\mb{x}, \bs{\phi})$ as
\begin{align}
||\mb{v}^{(3)}(\mb{x}, \bs{\phi})|| 
&\le \frac{1}{n\delta^2}|| \mb{l}(\mb{x}, \bs{\phi}^\ast)|| + \frac{1}{\delta} ||\mb{m}_{\mb{x}, \bs{\phi}^\ast} (\bs{\phi} - \bs{\phi}^ \ast)|| + \frac{1}{n\delta^2} ||\mb{v}^{(1)}(\mb{x}, \bs{\phi})|| \nonumber \\[4pt]
&< 1 + r^2 + r^3 \kappa_1, \label{eq:lemma:ie1}
\end{align}
and, by condition ($\mathcal{H}2$), we have a bound for $\mb{v}^{(4)}( \bs{\omega})$ as
\begin{align}
||\mb{v}^{(4)}( \bs{\omega})|| 
&< s^3 \kappa_2 \left( \frac{1}{\delta^2} || \bs{\omega} - \bs{\omega}^\ast||  \right) \nonumber \\[4pt]
&<  s^3 \kappa_2. \label{eq:lemma:ie2}
\end{align}

Next, since $\mb{B}_{\bs{\phi}^\ast}$ is positive definite, we can pre-multiply the equation (\ref{eq:lemma:e3}) by $\mb{J}_{\bs{\omega}^\ast}^{T} \mb{B}_{\bs{\phi}^\ast}^{-1}$ to get an expression for $\mb{J}_{\bs{\omega}^\ast}^{T}(\bs{\phi} - \bs{\phi}^\ast)$, which is then plugged into the equation (\ref{eq:lemma:e4}) to obtain the following equation 
\begin{equation}
\mb{J}^{T}_{\bs{\omega}^\ast} \mb{B}^{-1}_{\bs{\phi}^\ast} \mb{J}_{\bs{\omega}} \bs{\lambda} + \mb{K}^{T}_{\bs{\omega}^\ast} (\bs{\psi} - \bs{\psi}^\ast) +  \delta^2 \left( \mb{J}^{T}_{\bs{\omega}^\ast} \mb{B}^{-1}_{\bs{\phi}^\ast} \mb{v}^{(3)} (\mb{x}, \bs{\phi}) + \mb{v}^{(4)}(\bs{\omega}) \right) = \mb{0}. \label{eq:lemma:e5}
\end{equation}
Now the condition ($\mathcal{H}3$) implies that $\mb{J}^{T}_{\bs{\omega}^\ast} \mb{B}^{-1}_{\bs{\phi}^\ast} \mb{J}_{\bs{\omega}^\ast}$ is also positive definite. Besides, according to the condition ($\mathcal{H}1$), the elements of $\mb{J}_{\bs{\omega}}$ are all continuous functions of $\bs{\omega}$. It then follows that $\mb{J}_{\bs{\omega}^\ast}^{T} \mb{B}^{-1}_{\bs{\phi}^\ast} \mb{J}_{\bs{\omega}}$ is also non-singular within $U_\delta$ for sufficiently small $\delta$. Thus, we can solve the equation (\ref{eq:lemma:e5}) with respect to $\bs{\lambda}$ and express it in terms of $\bs{\omega}$
\begin{equation}
\bs{\lambda} = 
- \mb{A}_{\bs{\omega}} \left\{ \mb{K}_{\bs{\omega}^\ast}^{T} (\bs{\psi} - \bs{\psi}^\ast)  + 
 \delta^2 \left( \mb{J}_{\bs{\omega}^\ast}^{T} \mb{B}^{-1}_{\bs{\phi}^\ast} \mb{v}^{(3)} (\mb{x}, \bs{\phi}) + \mb{v}^{(4)}(\bs{\omega}) \right) \right\}, 
\label{eq:lemma:e6}
\end{equation}
where we define the notation $\mb{A}_{\bs{\omega}} = (\mb{J}_{\bs{\omega}^\ast}^{T} \mb{B}^{-1}_{\bs{\phi}^\ast} \mb{J}_{\bs{\omega}} )^{-1}$.

So far, we are basically replicating the steps of the proof given by~\citet{AitchisonSilvey1958}. Now, we need to take some extra steps to find the expression for $(\bs{\psi} - \bs{\psi}^\ast)$. By applying the equation (\ref{eq:lemma:e6}) to substitute for $\bs{\lambda}$, the equation (\ref{eq:lag:e2}) becomes:
\begin{equation}
\mb{K}_{\bs{\omega}} \mb{A}_{\bs{\omega}} \mb{K}_{\bs{\omega}^\ast}^{T} (\bs{\psi} - \bs{\psi}^\ast) +  \delta^2  \mb{K}_{\bs{\omega}} \mb{A}_{\bs{\omega}}
 \left( \mb{J}_{\bs{\omega}^\ast}^{T} \mb{B}^{-1}_{\bs{\phi}^\ast} \mb{v}^{(3)} (\mb{x}, \bs{\phi}) + \mb{v}^{(4)}(\bs{\omega}) \right)
 = \mb{0}. 
\label{eq:lemma:e7}
\end{equation}
Following the same argument for $\mb{J}_{\bs{\omega}^\ast}^{T} \mb{B}^{-1}_{\bs{\phi}^\ast} \mb{J}_{\bs{\omega}}$, the condition ($\mathcal{H}4$) ensures that the matrix $\mb{K}_{\bs{\omega}} \mb{A}_{\bs{\omega}} \mb{K}_{\bs{\omega}^\ast}^{T}$ is not singular within a sufficiently small neighborhood of $\bs{\omega}^\ast$.  
Thus, we can solve the equation (\ref{eq:lemma:e7}) with respect to $\bs{\psi}$ and get
\begin{equation}
\bs{\psi} - \bs{\psi}^\ast = - \delta^2 \mb{v}^{(5)}(\mb{x}, \bs{\omega}), \label{eq:lemma:e8}
\end{equation}
where 
\begin{equation}
\mb{v}^{(5)}(\mb{x}, \bs{\omega}) = 
\left( \mb{K}_{\bs{\omega}} \mb{A}_{\bs{\omega}}  \mb{K}_{\bs{\omega}^\ast}^{T} \right)^{-1}
\left( \mb{K}_{\bs{\omega}} \mb{A}_{\bs{\omega}} \right)
\left( \mb{J}_{\bs{\omega}^\ast}^{T} \mb{B}^{-1}_{\bs{\phi}^\ast}\mb{v}^{(3)}(\mb{x}, \bs{\phi}) + \mb{v}^{(4)}(\bs{\omega}) \right).
\label{eq:lemma:nu5}
\end{equation}
We then plug the equation (\ref{eq:lemma:e8}) into the equation (\ref{eq:lemma:e6}) and derive an updated expression for $\bs{\lambda}$:
\begin{equation}
\bs{\lambda} = - \delta^2
\mb{v}^{(6)}(\mb{x}, \bs{\omega}), 
\label{eq:lemma:e9}
\end{equation} 
where
\begin{equation}
\mb{v}^{(6)}(\mb{x}, \bs{\omega}) = \mb{A}_{\bs{\omega}} \left\{ - \mb{K}_{\bs{\omega}^\ast}^{T} \mb{v}^{(5)}(\mb{x}, \bs{\omega}) + \left( \mb{J}_{\bs{\omega}^\ast}^{T} \mb{B}^{-1}_{\bs{\phi}^\ast}\mb{v}^{(3)}(\mb{x}, \bs{\phi}) + \mb{v}^{(4)}(\bs{\omega}) \right) \right\}.
\label{eq:lemma:nu6}
\end{equation}

By combining the equations (\ref{eq:lemma:e3}) and (\ref{eq:lemma:e8}), with $\bs{\lambda}$ substituted using the equation (\ref{eq:lemma:e9}), we find that the solution of the equations (\ref{eq:lag:e1}) - (\ref{eq:lag:e3}) should also satisfy
\begin{equation}
- \tilde{\mb{B}}_{\bs{\omega}^\ast} (\bs{\omega} - \bs{\omega}^\ast) + \delta^2 \mb{v}(\mb{x}, \bs{\omega}) = \mb{0},
\label{eq:lemma:e10}
\end{equation}
where
$$
\tilde{\mb{B}}_{\bs{\omega}^\ast} = 
\left( \begin{array}{cc} 
\mb{B}_{\bs{\phi}^\ast} & \mb{0} \\[4pt]
\mb{0} & \mb{I}_{s-r} 
\end{array} \right) ,
$$
and
$$
\mb{v}(\mb{x}, \bs{\omega}) = 
\left( \begin{array}{c} 
\mb{v}^{(3)}(\mb{x}, \bs{\phi}) - \mb{J}_{\bs{\omega}}  \mb{v}^{(6)}(\mb{x}, \bs{\omega}) \\[4pt]
- \mb{v}^{(5)}(\mb{x}, \bs{\omega}) 
\end{array} \right).
$$

Finally, we have shown in the inequalities (\ref{eq:lemma:ie1}) and (\ref{eq:lemma:ie2}) that $||\mb{v}^{(3)}(\mb{x}, \bs{\phi})||$ and $|| \mb{v}^{(4)}(\bs{\omega}) ||$ are bounded within $U_\delta$. Also, given that $\mb{A}_{\bs{\omega}}$ and $\mb{K}_{\bs{\omega}} \mb{A}_{\bs{\omega}}  \mb{K}_{\bs{\omega}^\ast}^{T}$ are positive definite within the closed set $U_\delta$, their determinants are both positive within $U_\delta$. Therefore, the continuity of the elements of these two matrices ensures that their determinants are uniformly bounded within $U_\delta$. Then it follows that $\mb{v}(\mb{x}, \bs{\omega})$ is a continuous function on $U_\delta$ and $||\mb{v}(\mb{x}, \bs{\omega})||$ is bounded by a positive number, say $\kappa^{\dagger}$,  for all $\bs{\omega} \in U_\delta$.

Now, we prove the sufficiency of the condition. Suppose the equation (\ref{eq:lemma:e10}) has a solution $\hat{\bs{\omega}}$. That is, $\hat{\bs{\omega}}$ satisfies
\begin{equation}
\left( \begin{array}{cc} 
\mb{B}_{\bs{\phi}^\ast} & \mb{0} \\[4pt]
\mb{0} & \mb{I}_{s-r} 
\end{array} \right)
\left( \begin{array}{c} 
\hat{\bs{\phi}} - \bs{\phi}^\ast \\[4pt]
\hat{\bs{\psi}} - \bs{\psi}^\ast 
\end{array} \right)
= 
\delta^2 \left( \begin{array}{c} 
\mb{v}^{(3)}(\mb{x}, \hat{\bs{\phi}}) - \mb{J}_{\hat{\bs{\omega}}}  \mb{v}^{(6)}(\mb{x}, \hat{\bs{\omega}}) \\[4pt]
- \mb{v}^{(5)}(\mb{x}, \hat{\bs{\omega}}) 
\end{array} \right).
\label{eq:lemma:e11}
\end{equation}
By pre-multiplying the equation (\ref{eq:lemma:e11}) by the $t \times s$ matrix $(\mb{J}_{\bs{\omega}^\ast}^{T} \mb{B}_{\bs{\phi}^\ast}^{-1}, {\ } \mb{K}_{\bs{\omega}^\ast}^{T})$, we have
\begin{equation}
\mb{J}_{\bs{\omega}^\ast}^{T} (\hat{\bs{\phi}} - \bs{\phi}^\ast) + \mb{K}_{\bs{\omega}^\ast}^{T} (\hat{\bs{\psi}} - \bs{\psi}^\ast) + \delta^2 \mb{v}^{(4)}(\hat{\bs{\omega}}) = \mb{0}.
\label{eq:lemma:e12}
\end{equation}
We first write $\mb{v}^{(1)}(\mb{x}, \bs{\phi})$ and $\mb{v}^{(2)}(\bs{\omega})$ as the remainders after expanding $\mb{s}(\mb{x}, \bs{\phi} )$ and $\mb{h}(\bs{\omega})$, respectively, 
\begin{align}
\mb{v}^{(1)}(\mb{x}, \bs{\phi})  &= \mb{s}(\mb{x}, \bs{\phi} ) - \mb{s}(\mb{x}, \bs{\phi}^\ast ) - 
\mb{M}_{\mb{x},\bs{\phi}^\ast} (\bs{\phi} - \bs{\phi}^\ast), \label{eq:lemma:nu1} \\[4pt]
\mb{v}^{(2)}(\bs{\omega}) &= \mb{h}(\bs{\omega})
- \mb{J}_{\bs{\omega}^\ast}^{T} (\bs{\phi} - \bs{\phi}^\ast) - \mb{K}_{\bs{\omega}^\ast}^{T}(\bs{\psi} - \bs{\psi}^\ast). \label{eq:lemma:nu2}
\end{align}
Applying the equations (\ref{eq:lemma:nu1}) and (\ref{eq:lemma:nu2}) to substitute for $\mb{v}^{(1)}(\mb{x}, \bs{\phi})$ and $\mb{v}^{(2)}(\bs{\omega})$ in the equations (\ref{eq:lemma:nu3}) and (\ref{eq:lemma:nu4}), respectively, we get
\begin{align}
\mb{v}^{(3)}(\mb{x}, \bs{\phi}) &= \frac{1}{\delta^2} \left\{ \frac{1}{n} \mb{s} (\mb{x}, \bs{\phi}) + \mb{B}_{\bs{\phi}^\ast}(\bs{\phi} - \bs{\phi}^\ast) \right\}, \label{eq:lemma:nu3:new} \\[4pt]
\mb{v}^{(4)}(\bs{\omega}) &= \frac{1}{\delta^2} \left\{ \mb{h}(\bs{\omega})
- \mb{J}_{\bs{\omega}^\ast}^{T} (\bs{\phi} - \bs{\phi}^\ast) - \mb{K}_{\bs{\omega}^\ast}^{T}(\bs{\psi} - \bs{\psi}^\ast) \right\}.
\label{eq:lemma:nu4:new}
\end{align}
Finally, we substitute for $\mb{v}^{(4)}(\hat{\bs{\omega}})$ in the equation (\ref{eq:lemma:e12}) using the equation (\ref{eq:lemma:nu4:new}). It immediately follows that $\mb{h}(\hat{\bs{\omega}}) = \mb{0}$.

Next, we apply the equations (\ref{eq:lemma:nu3:new}) and (\ref{eq:lemma:nu4:new}) to substitute for $\mb{v}^{(3)}(\mb{x}, \bs{\phi})$ and $\mb{v}^{(4)}(\bs{\omega})$ in the equations (\ref{eq:lemma:nu5}) and (\ref{eq:lemma:nu6}), and end with the following expressions for $ \mb{v}^{(5)}(\mb{x}, \bs{\omega}) $ and $ \mb{v}^{(6)}(\mb{x}, \bs{\omega}) $:
\begin{align}
\mb{v}^{(5)}(\mb{x}, \bs{\omega}) &= - (\bs{\psi} - \bs{\psi}^\ast) + \left( \mb{K}_{\hat{\bs{\omega}}} \mb{A}_{\omega}  \mb{K}_{\bs{\omega}^\ast}^{T} \right)^{-1}  \mb{K}_{\bs{\omega}} \mb{Y}_{\bs{\omega}} \left( \frac{1}{n} \mb{s} (\mb{x}, \bs{\phi}) \right), \label{eq:lemma:nu5:new} \\[4pt]
\mb{v}^{(6)}(\mb{x}, \bs{\omega}) &=  \mb{Y}_{\bs{\omega}} \left( \frac{1}{n} \mb{s} (\mb{x}, \bs{\phi}) \right)  -  \mb{K}_{\bs{\omega}^\ast}^{T} \left( \mb{K}_{\bs{\omega}} \mb{A}_{\omega}  \mb{K}_{\bs{\omega}^\ast}^{T} \right)^{-1}
 \mb{K}_{\bs{\omega}} \mb{Y}_{\bs{\omega}} \left( \frac{1}{n} \mb{s} (\mb{x}, \bs{\phi}) \right), \label{eq:lemma:nu6:new}
\end{align}
where $\mb{Y}_{\bs{\omega}}$ is defined as $ \mb{Y}_{\bs{\omega}} = \mb{A}_{\bs{\omega}} \mb{J}_{\bs{\omega}^\ast}^{T} \mb{B}_{\bs{\phi}^\ast}^{-1} $.  
Now, by using the equations (\ref{eq:lemma:nu3:new}), (\ref{eq:lemma:nu5:new}) and (\ref{eq:lemma:nu6:new}) to substitue for  $\mb{v}^{(3)}(\mb{x}, \hat{\bs{\phi}})$, $\mb{v}^{(5)}(\mb{x}, \hat{\bs{\omega}})$ and $\mb{v}^{(6)}(\mb{x}, \hat{\bs{\omega}})$ in the equation (\ref{eq:lemma:e11}), respectively, we can see that $\hat{\bs{\omega}}$ satisfies
\begin{align*}
\frac{1}{n}\mb{s}(\mb{x}, \hat{\bs{\phi}}) - \mb{J}_{\hat{\bs{\omega}}} \mb{Y}_{\hat{\bs{\omega}}} \left( \frac{1}{n}\mb{s}(\mb{x}, \hat{\bs{\phi}}) \right) &= \mb{0}, \\[4pt]
- \mb{K}_{\hat{\bs{\omega}}} \mb{Y}_{\hat{\bs{\omega}}} \left( \frac{1}{n}\mb{s}(\mb{x}, \hat{\bs{\phi}}) \right)  &= \mb{0}.
\end{align*}
As we have shown earlier that $\mb{h}(\hat{\bs{\omega}}) = \mb{0}$, it is easy to see that $\hat{\bs{\omega}}$, jointly with $\hat{\bs{\lambda}} = - \mb{Y}_{\hat{\bs{\omega}}} \mb{s}(\mb{x}, \hat{\bs{\phi}})/n $, solves the equations (\ref{eq:lag:e1}) - (\ref{eq:lag:e3}).
\end{proof}

We now give the following theorem to show the existence of a solution of the equations (\ref{eq:lag:e1}) - (\ref{eq:lag:e3}). 

\begin{theorem} \label{thm:t1}
Subject to conditions on $f$ and $\mb{h}$, if $\delta$ is a sufficiently small given positive number, $\epsilon$ is a given positive number less than 1 and if $\mb{x} \in \mb{X}_n$, then the equations (\ref{eq:lag:e1}) - (\ref{eq:lag:e3}) have a solution $(\hat{\bs{\omega}}, \hat{\bs{\lambda}})$ such that $\hat{\bs{\omega}} \in U_\delta$.
\end{theorem}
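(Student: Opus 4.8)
The plan is to invoke Lemma~\ref{thm:l1} to convert the assertion into a fixed-point problem and then to solve that problem with Brouwer's theorem. By Lemma~\ref{thm:l1}, for $\mb{x} \in \mb{X}_n$ the equations (\ref{eq:lag:e1})--(\ref{eq:lag:e3}) have a solution $(\hat{\bs{\omega}}, \hat{\bs{\lambda}})$ with $\hat{\bs{\omega}} \in U_\delta$ precisely when the equation $-\tilde{\mb{B}}_{\bs{\omega}^\ast}(\bs{\omega}-\bs{\omega}^\ast) + \delta^2 \mb{v}(\mb{x},\bs{\omega}) = \mb{0}$ has a root in $U_\delta$. Since $\tilde{\mb{B}}_{\bs{\omega}^\ast}$ is block-diagonal with blocks $\mb{B}_{\bs{\phi}^\ast}$ (positive definite) and $\mb{I}_{s-r}$, it is nonsingular, so this equation is equivalent to the fixed-point equation $\bs{\omega} = T(\bs{\omega})$, where $T(\bs{\omega}) := \bs{\omega}^\ast + \delta^2 \tilde{\mb{B}}_{\bs{\omega}^\ast}^{-1}\mb{v}(\mb{x},\bs{\omega})$. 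It therefore suffices to show that $T$ has a fixed point in $U_\delta$.

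First I would fix $\delta$ small enough that the matrix inverses occurring in the construction of $\mb{v}$ inside the proof of Lemma~\ref{thm:l1} --- notably $\mb{A}_{\bs{\omega}} = (\mb{J}_{\bs{\omega}^\ast}^{T}\mb{B}_{\bs{\phi}^\ast}^{-1}\mb{J}_{\bs{\omega}})^{-1}$ and $(\mb{K}_{\bs{\omega}}\mb{A}_{\bs{\omega}}\mb{K}_{\bs{\omega}^\ast}^{T})^{-1}$ --- exist for every $\bs{\omega} \in U_\delta$; this uses the full-rank condition $(\mathcal{H}3)$ at $\bs{\omega}^\ast$ together with the continuity of $\mb{J}_{\bs{\omega}}$ and $\mb{K}_{\bs{\omega}}$ guaranteed by $(\mathcal{H}1)$. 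On such a $U_\delta$, $\mb{v}(\mb{x},\cdot)$ is continuous, hence so is $T$. Next I would check the self-map property: by Lemma~\ref{thm:l1}, $\|\mb{v}(\mb{x},\bs{\omega})\| \le \kappa^\dagger$ throughout $U_\delta$, so $\|T(\bs{\omega}) - \bs{\omega}^\ast\| \le \delta^2\,\|\tilde{\mb{B}}_{\bs{\omega}^\ast}^{-1}\|\,\kappa^\dagger$, and choosing $\delta$ additionally so small that $\delta\,\|\tilde{\mb{B}}_{\bs{\omega}^\ast}^{-1}\|\,\kappa^\dagger \le 1$ makes the right-hand side at most $\delta$, i.e. $T(U_\delta) \subseteq U_\delta$. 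Since $U_\delta$ is a nonempty, compact, convex subset of $\mathbb{R}^s$ and $T : U_\delta \to U_\delta$ is continuous, Brouwer's fixed-point theorem yields $\hat{\bs{\omega}} \in U_\delta$ with $T(\hat{\bs{\omega}}) = \hat{\bs{\omega}}$, i.e. $\hat{\bs{\omega}}$ solves equation (\ref{eq:lemma:e10}). The sufficiency direction of Lemma~\ref{thm:l1} then furnishes $\hat{\bs{\lambda}} = -\mb{Y}_{\hat{\bs{\omega}}}\,\mb{s}(\mb{x},\hat{\bs{\phi}})/n$ so that $(\hat{\bs{\omega}}, \hat{\bs{\lambda}})$ solves (\ref{eq:lag:e1})--(\ref{eq:lag:e3}), completing the argument.

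The step I expect to require the most care is the compatibility of the two smallness requirements on $\delta$: we need $\kappa^\dagger$ not to deteriorate as $\delta$ shrinks. The clean way to handle this is to first select $\delta_0$ meeting the invertibility requirement and producing, via the bounds (\ref{eq:lemma:ie1})--(\ref{eq:lemma:ie2}) and the uniform bounds on $\|\mb{A}_{\bs{\omega}}\|$ and $\|(\mb{K}_{\bs{\omega}}\mb{A}_{\bs{\omega}}\mb{K}_{\bs{\omega}^\ast}^{T})^{-1}\|$ on the compact set $U_{\delta_0}$, a single constant $\kappa^\dagger = \kappa^\dagger(\delta_0)$; because $U_\delta \subseteq U_{\delta_0}$ for all $\delta \le \delta_0$, the same $\kappa^\dagger$ bounds $\|\mb{v}\|$ on every such $U_\delta$, so one may freely shrink $\delta$ below $1/(\|\tilde{\mb{B}}_{\bs{\omega}^\ast}^{-1}\|\,\kappa^\dagger)$ without reopening the earlier estimates. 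Everything else is routine: continuity of $T$ is immediate from that of $\mb{v}$, compactness and convexity of the closed ball $U_\delta$ are standard, and the norm inequality is elementary.
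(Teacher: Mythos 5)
Your argument is correct and follows essentially the same route as the paper, which simply invokes the proof of Theorem~1 of Aitchison and Silvey with the modified Lemma~\ref{thm:l1}: both reduce the claim via the lemma to solving $-\tilde{\mb{B}}_{\bs{\omega}^\ast}(\bs{\omega}-\bs{\omega}^\ast)+\delta^2\mb{v}(\mb{x},\bs{\omega})=\mb{0}$ on $U_\delta$ and then apply a Brouwer-type existence argument using the uniform bound $\kappa^\dagger$ and the minimum latent root $\min\{\mu_0,1\}$ of $\tilde{\mb{B}}_{\bs{\omega}^\ast}$, arriving at the same smallness condition on $\delta$. The only cosmetic difference is that you apply Brouwer's theorem directly to the explicit self-map $T$, whereas Aitchison and Silvey use the equivalent corollary that a continuous vector field pointing inward on the boundary of a ball has a zero inside; your handling of the uniformity of $\kappa^\dagger$ in $\delta$ is a point the paper leaves implicit and is resolved correctly.
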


\begin{proof}
The proof of Theorem 1 in ~\citep{AitchisonSilvey1958} works here, provided the modified version of Lemma \ref{thm:l1} given above is used. Also, it is important to notice that the matrix  $\tilde{\mb{B}}_{\bs{\omega}^\ast}$ defined in Lemma 1 is positive definite provided that $\mb{B}_{\bs{\phi}^\ast}$ is positive definite, and its minimum latent root is $\min\{\mu_0, 1\}$, where $\mu_0$ is the latent minimum root of $\mb{B}_{\bs{\phi}^\ast}$. Details are omitted.
\end{proof}

For the remainder of this section, we are going to show that the solution of the equations (\ref{eq:lag:e1}) - (\ref{eq:lag:e3}) as stated in Theorem \ref{thm:t1} locally maximizes the log-likelihood subject to the constraints. This result was proved in~\citep{AitchisonSilvey1958} for the identified model. However, we are not able to prove this result for the partially identified model with a direct extension of their proof. Alternatively, we take another route and use the approach detailed by~\citet{Spring1985}.

To match with the set-up in~\citep{Spring1985}, we change the order of variables and let $\bs{\eta} = (\bs{\lambda}, \bs{\omega})$. Let $\mb{HT}$ denote the second order partial derivatives of  the Lagrangian function $\ell(\mb{x},  \bs{\phi})/n+ \bs{\lambda}^{T}\mb{h}(\bs{\omega})$ evaluated at the critical point $\hat{\bs{\eta}} = ( \hat{\bs{\lambda}}, \hat{\bs{\omega}})$
$$
\mb{HT}^{(n)} = \left(
\begin{array}{ccc}
\mb{0} & \mb{J}_{\hat{\bs{\omega}}}^{T} & \mb{K}_{\hat{\bs{\omega}}}^{T} \\[4pt] 
\mb{J}_{\hat{\bs{\omega}}} & \frac{1}{n}\mb{M}_{\hat{\bs{\phi}}} + \mb{X}_{\hat{\bs{\lambda}}, \hat{\bs{\omega}}} & \mb{Y}_{\hat{\bs{\lambda}}, \hat{\bs{\omega}}}^{T} \\[4pt]
\mb{K}_{\hat{\bs{\omega}}} & \mb{Y}_{\hat{\bs{\lambda}}, \hat{\bs{\omega}}} & \mb{Z}_{\hat{\bs{\lambda}}, \hat{\bs{\omega}}}  
\end{array}
\right), 
$$
where
$$
\left(
\begin{array}{cc}
\mb{X}_{\hat{\bs{\lambda}}, \hat{\bs{\omega}}} &
\mb{Y}_{\hat{\bs{\lambda}}, \hat{\bs{\omega}}}^{T} \\[4pt]
\mb{Y}_{\hat{\bs{\lambda}}, \hat{\bs{\omega}}} &
\mb{Z}_{\hat{\bs{\lambda}}, \hat{\bs{\omega}}} 
\end{array}
\right) = 
\sum_{k=1}^{t}  \hat{\lambda}_{k} 
\left(
\begin{array}{ccc}
\frac{\partial^2 h_k}{\partial \omega_1 \partial \omega_1} & \cdots &
\frac{\partial^2 h_k}{\partial \omega_1 \partial \omega_s} \\[4pt]
\vdots & \ddots & \vdots \\[4pt]
\frac{\partial^2 h_k}{\partial \omega_s \partial \omega_1} & \cdots &
\frac{\partial^2 h_k}{\partial \omega_1s \partial \omega_s} 
\end{array}
\right),
$$
with $\mb{X}_{\hat{\bs{\lambda}}, \hat{\bs{\omega}}}$ being the upper-left $r \times r$ block matrix, $\mb{Y}_{\hat{\bs{\lambda}}, \hat{\bs{\omega}}}$ being the bottom-left $r \times (s-r)$ block matrix, and $\mb{Z}_{\hat{\bs{\lambda}}, \hat{\bs{\omega}}}$ being the bottom-right $(s-r) \times (s-r)$ block matrix. Let $\Lambda^{(n)}_k$ denote the principal upper left $k$-th order minor of the Hessian Matrix $\mb{HT}^{(n)}$. According to Theorem 1 in~\citep{Spring1985}, $\hat{\bs{\omega}}$ locally maximizes the log-likelihood function subject to the constraints, so long as $ (-1)^{t+p} \Lambda^{(n)}_{2t+p}$, $p = 1, \dots,  s-t$, are all positive.

Note that $\hat{\bs{\lambda}}$ was defined as $\hat{\bs{\lambda}} =  -\mb{Y}_{\hat{\bs{\omega}}} \left( \mb{s}(\mb{x}, \hat{\bs{\phi}})/n \right)$. For any small number $\delta$, by the equation (\ref{eq:lemma:nu3:new}) and the inequality (\ref{eq:lemma:ie1}),  if $n$ is sufficiently large, we have
\begin{align*}
\frac{1}{n} || \mb{s} (\mb{x}, \hat{\bs{\phi}}) || &= || - \mb{B}_{\bs{\phi}^\ast}(\hat{\bs{\phi}} - \bs{\phi}^\ast) + \delta^2 \mb{v}^{(3)}(\mb{x}, \hat{\bs{\phi}}) || \\[4pt]
&< \kappa_3 \delta + (1+r^2 + r^3\kappa_1)\delta^2,
\end{align*}
where $\kappa_3$ is a positive number that depends only on the elements of $\mb{B}_{\bs{\phi}^\ast}$. Also, the elements of $\mb{Y}_{\hat{\bs{\omega}}}$ are bounded by a number independent of $\delta$ for $\hat{\bs{\omega}} \in  U_\delta$. Therefore, we have
\begin{align*}
|| \hat{\bs{\lambda}} || &= \frac{1}{n} || \mb{Y}_{\hat{\bs{\omega}}}  \mb{s} (\mb{x}, \hat{\bs{\phi}}) || \\[4pt]
&< \kappa_4 \delta + \kappa_5 \delta^2,
\end{align*}
where $\kappa_4$ and $\kappa_5$ are positive numbers independent of $\delta$. That is, $\hat{\bs{\lambda}}$ converges to $\mb{0}$ as $n$ goes to infinity.  By condition $\mathcal{H}2$, the second partial derivatives $ \partial^2 h_k(\bs{\omega})/\partial \omega_i \partial \omega_j  $, $i, j =1, \dots, s$, $k = 1, \dots, k$, are all bounded by a constant $2\kappa_2$. Thus, it follows that $\mb{X}_{\hat{\bs{\lambda}}, \hat{\bs{\omega}}} \to \mb{0}_{r\times r}$, $\mb{Y}_{\hat{\bs{\lambda}}, \hat{\bs{\omega}}} \to \mb{0}_{r\times (s-r)}$, and $\mb{Z}_{\hat{\bs{\lambda}}, \hat{\bs{\omega}}} \to \mb{0}_{(s-r)\times (s-r)}$. Also, it is easy to see from Theorem 1 that, for $\hat{\bs{\omega}} \in U_\delta$ with sufficiently small value of $\delta$, $\hat{\bs{\omega}}$ converges to $\bs{\omega}^\ast$ as $n$ goes to infinity. By condition ($\mathcal{H}1$), the elements of $\mb{J}_{\bs{\omega}}$ and $\mb{K}_{\bs{\omega}}$ are all continuous functions of $\bs{\omega}$. Thus, as $n$ goes to infinity, $\mb{J}_{\hat{\bs{\omega}}}$, $\mb{K}_{\hat{\bs{\omega}}}$, and $\mb{M}_{\hat{\bs{\phi}}}/n$ approach $\mb{J}_{\bs{\omega}^\ast}$,  $\mb{K}_{\bs{\omega}^\ast}$ and $\mb{M}_{\bs{\phi}^\ast}/n$, respectively. Furthermore, by property ($\mathcal{X}2$), we have $\mb{M}_{\bs{\phi}^\ast}/n$ approaches $-\mb{B}_{\bs{\phi}^\ast}$ as $n$ goes to infinity. Finally, we have $\mb{HT}^{(n)}$ converges to $\mb{HT}^{(\infty)}$ as $n$ goes to infinity, where
$$
\mb{HT}^{(\infty)} = \left(
\begin{array}{ccc}
\mb{0} & \mb{J}_{\bs{\omega}^\ast}^{T} & \mb{K}_{\bs{\omega}^\ast}^{T} \\[4pt]
\mb{J}_{\bs{\omega}^\ast} & -\mb{B}_{\bs{\phi}^\ast}  & \mb{0} \\[4pt]
 \mb{K}_{\bs{\omega}^\ast} & \mb{0} &  \mb{0}
\end{array}
\right) .
$$
Then, for sufficiently large $n$, the signs of the leading principal minors of $\mb{HT}^{(n)}$ are the same as those of their corresponding minors of $\mb{HT}^{(\infty)}$. Therefore, we can instead study the signs of the leading principal minors of $\mb{HT}^{(\infty)}$. 

For brevity, we suppress the subscripts $\bs{\omega}^\ast$ and $\bs{\phi}^\ast$. First, given that $\mb{B}$ is positive definite, by  Sylvester's criterion the upper left $d \times d$ corner matrix of $\mb{B}$, denoted by $\mb{B}_d$, is also positive definite, for $d=1, \dots, r$. Next, since $rank(\mb{J}) = t$, with some re-ordering of the rows if necessary, the first $d$ rows of $\mb{J}$, denoted by $\mb{J}_d$, is a $d \times t$ matrix of full column rank $t$, and thus the matrix $\mb{J}_d^{T} \mb{B}_d^{-1} \mb{J}_d$ is positive definite, for $d  = t+1, \dots, r$. Similarly,  as $rank(\mb{K}) = s-r$, the first $d$ rows of $\mb{K}$, denoted by $\mb{K}_d$, is a $d \times t$ matrix of full row rank $d$, and thus the matrix $\mb{K}_d \left(\mb{J}^{T} \mb{B}^{-1} \mb{J}\right)^{-1}\mb{K}_d^{T}$ is again positive definite, for $d  = 1, \dots, s-r$. Now we are ready to study the sign of $(-1)^{t+p} \Lambda^{(\infty)}_{2t+p}$, for $p = 1, \dots,  s-t$. On one hand, for $p = 1, \dots, r-t$, we have
\begin{align*}
(-1)^{t+p}\Lambda^{(\infty)}_{2t+p} 
&= (-1)^{t+p} \times \mathrm{det} \left( \left(
\begin{array}{ccc}
\mb{0} & \mb{J}_{t+p}^{T} \\[4pt]
 \mb{J}_{t+p}  & -\mb{B}_{t+p}
\end{array}
\right)
\right)\\[4pt]
&=  (-1)^{t+p} \times
\mathrm{det} \left( -\mb{B}_{t+p} \right) \times
\mathrm{det} \left(
 - \mb{J}_{t+p}^{T}  \left(-\mb{B}_{t+p} \right)^{-1} \mb{J}_{t+p}  
\right) \\[4pt]
&=  (-1)^{2t+2p} \times
\mathrm{det} \left( \mb{B}_{t+p} \right) \times
\mathrm{det} \left(
 \mb{J}_{t+p}^{T}  \mb{B}_{t+p}^{-1} \mb{J}_{t+p}  
\right) \\[4pt]
&> 0.
\end{align*}
On the other hand, for $p = r-t+1, \dots, s-t$, we have
\begin{align*}
(-1)^{t+p}\Lambda^{(\infty)}_{2t+p} 
&= (-1)^{t+p} \times
\mathrm{det} \left( \left(
\begin{array}{ccc}
\mb{0} & \mb{J}^{T} & \mb{K}_{t+p - r}^{T} \\[4pt]
\mb{J} & -\mb{B} & \mb{0} \\[4pt]
 \mb{K}_{t+p-r} & \mb{0} & \mb{0}
\end{array}
\right)
\right)\\[4pt]
&= {\ } (-1)^{t+p} \times
\mathrm{det} \left( \left(
\begin{array}{ccc}
\mb{0} & \mb{J}^{T} \\[4pt]
 \mb{J}  & -\mb{B}
\end{array}
\right)
\right) \times \\[4pt]
& \qquad
\mathrm{det} \left( -
\left( \begin{array}{c}
\mb{K}_{t+p-r}^{T} \\[4pt] \mb{0} 
\end{array} \right)
 \left(
\begin{array}{cc}
\mb{0} & \mb{J}^{T}  \\[4pt]
\mb{J} & -\mb{B} 
\end{array}
\right)^{-1}
\left( \begin{array}{cc}
\mb{K}_{t+p-r} & \mb{0}
\end{array} \right)
\right) \\[4pt]
& =  (-1)^{t+p} \times
\mathrm{det} \left( -\mb{B} \right) \times
\mathrm{det} \left(
\mb{J}^{T}  \mb{B}^{-1} \mb{J}  
\right) \times \\[4pt]
& \qquad
\mathrm{det} \left(  
- \mb{K}_{t+p-r} \left( \mb{J}^{T} \mb{B}^{-1} \mb{J}\right)^{-1} \mb{K}_{t+p-r}^{T} \right) \\[4pt]
& =  (-1)^{2t+2p}  \times
\mathrm{det} \left( \mb{B} \right) \times
\mathrm{det} \left(
\mb{J}^{T}  \mb{B}^{-1} \mb{J}  
\right) \times \\[4pt]
& \qquad \mathrm{det} \left(  
\mb{K}_{t+p-r} \left( \mb{J}^{T} \mb{B}^{-1} \mb{J}\right)^{-1} \mb{K}_{t+p-r}^{T} 
\right) \\[4pt]
& > 0.
\end{align*}
Therefore, we have shown that $(-1)^{t+p} \Lambda^{(\infty)}_{2t+p}$, $p=1, \dots, s-t$, is always positive, and so is $(-1)^{t+p} \Lambda^{(n)}_{2t+p}$ for sufficiently large $n$. Thus, it follows that $\hat{\bs{\omega}}$ is the constrained maximum likelihood estimator of the problem. 

\subsection{Asymptotic distributions}

In this section, we define sequences  $\left\{ (\hat{\bs{\omega}}_n, \hat{\bs{\lambda}}_n) \right\}$ that extends $(\hat{\bs{\omega}}, \hat{\bs{\lambda}})$, as stated in the Theorem 1, for all $\mb{x} \in \mathbb{R}^{n}$, and develop the asymptotic distribution for $(\hat{\bs{\omega}}_n, \hat{\bs{\lambda}}_n) $. Note that this section differs from the Section 5 of~\citep{AitchisonSilvey1958} in that the covariance matrix here becomes a partitioned matrix of $3\times 3$ blocks.

\begin{lemma} \label{thm:l2}
The following partitioned matrix is non-singular.
$$
\left( \begin{array}{ccc} 
\mb{B}_{\bs{\phi}^\ast} & \mb{0} & - \mb{J}_{\bs{\omega}^\ast} \\[4pt]
\mb{0} & \mb{0} & - \mb{K}_{\bs{\omega}^\ast} \\[4pt]
- \mb{J}_{\bs{\omega}^\ast}^{T} & - \mb{K}_{\bs{\omega}^\ast}^{T} & \mb{0}
\end{array} \right) 
$$
\end{lemma}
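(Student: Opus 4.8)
The plan is to show that the displayed $(s+t)\times(s+t)$ matrix has only the trivial null vector; since it is square, non-singularity follows. Partition a candidate null vector conformably with the three block rows as $(\mb{a}^{T}, \mb{b}^{T}, \mb{c}^{T})^{T}$, where $\mb{a} \in \mathbb{R}^{r}$, $\mb{b} \in \mathbb{R}^{s-r}$ and $\mb{c} \in \mathbb{R}^{t}$. Equating the product with the matrix to zero produces the three relations
\begin{align*}
\mb{B}_{\bs{\phi}^\ast} \mb{a} - \mb{J}_{\bs{\omega}^\ast} \mb{c} &= \mb{0}, \\
\mb{K}_{\bs{\omega}^\ast} \mb{c} &= \mb{0}, \\
\mb{J}_{\bs{\omega}^\ast}^{T} \mb{a} + \mb{K}_{\bs{\omega}^\ast}^{T} \mb{b} &= \mb{0}.
\end{align*}

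Next I would eliminate $\mb{a}$. Since $\mb{B}_{\bs{\phi}^\ast}$ is positive definite, hence invertible, the first relation gives $\mb{a} = \mb{B}_{\bs{\phi}^\ast}^{-1} \mb{J}_{\bs{\omega}^\ast} \mb{c}$. Substituting this into the third relation and pre-multiplying by $\mb{c}^{T}$ yields
$$
\mb{c}^{T} \mb{J}_{\bs{\omega}^\ast}^{T} \mb{B}_{\bs{\phi}^\ast}^{-1} \mb{J}_{\bs{\omega}^\ast} \mb{c} = - \mb{c}^{T} \mb{K}_{\bs{\omega}^\ast}^{T} \mb{b} = - (\mb{K}_{\bs{\omega}^\ast} \mb{c})^{T} \mb{b} = 0,
$$
where the last equality uses the second relation. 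As already observed in the proof of Lemma \ref{thm:l1}, condition $(\mathcal{H}3)$ together with the positive definiteness of $\mb{B}_{\bs{\phi}^\ast}$ makes $\mb{J}_{\bs{\omega}^\ast}^{T} \mb{B}_{\bs{\phi}^\ast}^{-1} \mb{J}_{\bs{\omega}^\ast}$ positive definite, so the displayed identity forces $\mb{c} = \mb{0}$. Then $\mb{a} = \mb{B}_{\bs{\phi}^\ast}^{-1} \mb{J}_{\bs{\omega}^\ast} \mb{c} = \mb{0}$, and the third relation collapses to $\mb{K}_{\bs{\omega}^\ast}^{T} \mb{b} = \mb{0}$; since $\mb{K}_{\bs{\omega}^\ast}$ has full row rank $s-r$ by $(\mathcal{H}3)$, its transpose is injective, so $\mb{b} = \mb{0}$. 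Hence the null space is trivial and the matrix is non-singular.

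The argument involves no serious obstacle; the only point requiring care is structural. One cannot reduce this matrix by a Schur complement taken on its central $(2,2)$ block, which is the zero block — instead the reduction must pivot first on the positive-definite block $\mb{B}_{\bs{\phi}^\ast}$ and then on $\mb{J}_{\bs{\omega}^\ast}^{T} \mb{B}_{\bs{\phi}^\ast}^{-1} \mb{J}_{\bs{\omega}^\ast}$, and one must be scrupulous to invoke the full-\emph{column}-rank part of $(\mathcal{H}3)$ for $\mb{J}_{\bs{\omega}^\ast}$ (to get positive definiteness) and the full-\emph{row}-rank part for $\mb{K}_{\bs{\omega}^\ast}$ (to get injectivity of $\mb{K}_{\bs{\omega}^\ast}^{T}$) at the appropriate steps. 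As an alternative route, one may note that this matrix is, up to a sign and a permutation of block rows and columns, exactly the limiting bordered Hessian $\mb{HT}^{(\infty)}$ whose leading principal minors — in particular its full determinant, corresponding to the case $p = s-t$ — were shown to be nonzero in the preceding subsection; but the direct null-space computation above is shorter and self-contained.
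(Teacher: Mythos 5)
Your argument is correct: the three block equations are read off correctly, the elimination $\mb{a} = \mb{B}_{\bs{\phi}^\ast}^{-1}\mb{J}_{\bs{\omega}^\ast}\mb{c}$ is legitimate since $\mb{B}_{\bs{\phi}^\ast}$ is positive definite, the quadratic-form step correctly kills $\mb{c}$ via the positive definiteness of $\mb{J}_{\bs{\omega}^\ast}^{T}\mb{B}_{\bs{\phi}^\ast}^{-1}\mb{J}_{\bs{\omega}^\ast}$ (full column rank of $\mb{J}_{\bs{\omega}^\ast}$), and the full row rank of $\mb{K}_{\bs{\omega}^\ast}$ then forces $\mb{b}=\mb{0}$. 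However, this is a genuinely different route from the paper's. The paper proves non-singularity constructively: it posits a conformably partitioned inverse $(\mb{P}_{ij})$, solves the block equations, and writes out every $\mb{P}_{ij}$ explicitly in terms of $\mb{B}^{-1}$, $(\mb{J}^{T}\mb{B}^{-1}\mb{J})^{-1}$ and $\{\mb{K}(\mb{J}^{T}\mb{B}^{-1}\mb{J})^{-1}\mb{K}^{T}\}^{-1}$. Your null-space argument is shorter and cleaner if the only goal is non-singularity, and it makes transparent exactly where each half of $(\mathcal{H}3)$ is used. What the paper's heavier computation buys is the explicit formulas for $\mb{P}_{11},\dots,\mb{P}_{33}$, which are not incidental: they are used immediately afterward to define $(\hat{\bs{\omega}}_n,\hat{\bs{\lambda}}_n)$ for $\mb{x}\notin\mb{X}_n$ and to state and verify the asymptotic covariance in Theorem \ref{thm:t2} (e.g.\ the identity reducing the sandwich product to $\mathrm{diag}$-block form with $-\mb{P}_{33}$ in the corner). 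So if your proof were adopted as written, the explicit inverse would still have to be derived separately before Theorem \ref{thm:t2} could be proved. Your closing observation that the matrix is, up to sign and a block permutation, the limiting bordered Hessian $\mb{HT}^{(\infty)}$ whose leading principal minors were already shown nonzero is also a valid third route.
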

\begin{proof}
For brevity, we omit the suffix $\bs{\phi}^\ast$ and $\bs{\omega}^\ast$. Then we wish to find a matrix 
$$
\left( \begin{array}{ccc}
\mb{P}_{11} & \mb{P}_{12} & \mb{P}_{13} \\[4pt]
\mb{P}_{21} & \mb{P}_{22} & \mb{P}_{23} \\[4pt]
\mb{P}_{31} &\mb{P}_{32} & \mb{P}_{33}
\end{array} \right)
$$
such that
$$
\left( \begin{array}{ccc} 
\mb{B} & \mb{0} & - \mb{J} \\[4pt]
\mb{0} & \mb{0} & - \mb{K} \\[4pt]
- \mb{J}^{T} & - \mb{K}^{T} & \mb{0} 
\end{array} \right)
\left( \begin{array}{ccc} 
\mb{P}_{11} & \mb{P}_{12} & \mb{P}_{13} \\[4pt]
\mb{P}_{21} & \mb{P}_{22} & \mb{P}_{23} \\[4pt]
\mb{P}_{31} & \mb{P}_{32} & \mb{P}_{33}
\end{array} \right)
=
\left( \begin{array}{ccc} 
\mb{I}_{r} & \mb{0} & \mb{0} \\[4pt]
\mb{0} & \mb{I}_{s-r} & \mb{0}  \\[4pt]
\mb{0} & \mb{0} & \mb{I}_{t}
\end{array} \right).
$$
Since $\mb{B}$ is positive definite, and  $\mb{J}$ and $\mb{K}$ are of full rank, it can be solved that 
\begin{align*}
\mb{P}_{11} &= \mb{B}^{-1} - \mb{B}^{-1}\mb{J}(\mb{J}^{T}\mb{B}^{-1}\mb{J})^{-1}\mb{J}^{T} \mb{B}^{-1}  + \\[4pt]
&\qquad \qquad \qquad \qquad \mb{B}^{-1}\mb{J}(\mb{J}^{T}\mb{B}^{-1}\mb{J})^{-1}\mb{K}^{T} \left\{ \mb{K}(\mb{J}^{T}\mb{B}^{-1}\mb{J})^{-1}\mb{K}^{T}  \right\}^{-1} \mb{K}(\mb{J}^{T}\mb{B}^{-1}\mb{J})^{-1}\mb{J}^{T} \mb{B}^{-1}, \\[4pt]
\mb{P}_{12} &= -\mb{B}^{-1}\mb{J}(\mb{J}^{T}\mb{B}^{-1}\mb{J})^{-1}\mb{K}^{T} \left\{ \mb{K}(\mb{J}^{T}\mb{B}^{-1}\mb{J})^{-1}\mb{K}^{T}  \right\}^{-1} ,\\[4pt]
\mb{P}_{13} &=  - \mb{B}^{-1}\mb{J}(\mb{J}^{T}\mb{B}^{-1}\mb{J})^{-1} + \mb{B}^{-1}\mb{J}(\mb{J}^{T}\mb{B}^{-1}\mb{J})^{-1}\mb{K}^{T} \left\{ \mb{K}(\mb{J}^{T}\mb{B}^{-1}\mb{J})^{-1}\mb{K}^{T}  \right\}^{-1} \mb{K}(\mb{J}^{T}\mb{B}^{-1}\mb{J})^{-1}, \\[4pt]
\mb{P}_{22} &= \left\{ \mb{K}(\mb{J}^{T}\mb{B}^{-1}\mb{J})^{-1}\mb{K}^{T}  \right\}^{-1}, \\[4pt]
\mb{P}_{23} &= -\left\{ \mb{K}(\mb{J}^{T}\mb{B}^{-1}\mb{J})^{-1}\mb{K}^{T}  \right\}^{-1} \mb{K}(\mb{J}^{T}\mb{B}^{-1}\mb{J})^{-1}, \\[4pt]
\mb{P}_{33} &= - (\mb{J}^{T}\mb{B}^{-1}\mb{J})^{-1}  + (\mb{J}^{T}\mb{B}^{-1}\mb{J})^{-1}\mb{K}^{T} \left\{ \mb{K}(\mb{J}^{T}\mb{B}^{-1}\mb{J})^{-1}\mb{K}^{T} \right\}^{-1} \mb{K}(\mb{J}^{T}\mb{B}^{-1}\mb{J})^{-1},
\end{align*}
and $\mb{P}_{21}$, $\mb{P}_{31}$, and $\mb{P}_{32}$ are the transposes of $\mb{P}_{12}$, $\mb{P}_{13}$, and $\mb{P}_{23}$, respectively, as it is easy to see that the matrix is symmetric. 
\end{proof}

Suppose $\mb{x} \in \mb{X}_n$, $\delta$ is small enough for Theorem \ref{thm:t1} to apply, and $(\hat{\bs{\omega}}, \hat{\bs{\lambda}})$ is a solution of equations (\ref{eq:lag:e1}) - (\ref{eq:lag:e3}) such that $\hat{\bs{\omega}} \in U_\delta$. We now write the equations (\ref{eq:lag:e1}) - (\ref{eq:lag:e3}) in a different form:
\begin{equation} 
\left(
\begin{array}{ccc} 
\mb{B}_{\bs{\phi}^\ast} +\hat{\mb{b}}(\mb{x}) & \mb{0} & - \mb{J}_{\bs{\omega}^\ast} - \hat{\mb{j}}(\mb{x}) \\[4pt]
\mb{0} & \mb{0} & - \mb{K}_{\bs{\omega}^\ast} - \hat{\mb{k}}(\mb{x}) \\[4pt]
- \mb{J}_{\bs{\omega}^\ast}^{T} - \hat{\mb{j}}'(\mb{x}) & - \mb{K}_{\bs{\omega}^\ast}^{T} - \hat{\mb{k}}'(\mb{x}) & \mb{0} 
\end{array}
 \right)
\left(
\begin{array}{c} 
\hat{\bs{\phi}} - \bs{\phi}^\ast \\[4pt]
\hat{\bs{\psi}} - \bs{\psi}^\ast \\[4pt]
\hat{\bs{\lambda}}
\end{array}
\right)
=
\left(
\begin{array}{c} 
\frac{1}{n} \mb{s}(\mb{x}, \bs{\phi}^\ast) \\[4pt]
\mb{0} \\[4pt]
\mb{0}
\end{array}
\right),
\label{eq:lag:mf1}
\end{equation}
where $\hat{\mb{b}}(\mb{x})$, $\hat{\mb{j}}(\mb{x})$, $\hat{\mb{j}}'(\mb{x})$, $\hat{\mb{k}}(\mb{x})$, and $\hat{\mb{k}}'(\mb{x})$ are matrices whose elements tend to 0 as $\delta$ goes to 0. Thus, by Lemma \ref{thm:l2}, if $\delta$ is sufficiently small, then the matrix 
$$
\left(
\begin{array}{ccc} 
\mb{B}_{\bs{\phi}^\ast} +\hat{\mb{b}}(\mb{x}) & \mb{0} & - \mb{J}_{\bs{\omega}^\ast} -  \hat{\mb{j}}(\mb{x})  \\[4pt]
\mb{0} & \mb{0} & - \mb{K}_{\bs{\omega}^\ast} - \hat{\mb{k}}(\mb{x}) \\[4pt]
- \mb{J}_{\bs{\omega}^\ast}^{T} - \hat{\mb{j}}'(\mb{x}) & - \mb{K}_{\bs{\omega}^\ast}^{T} - \hat{\mb{k}}'(\mb{x}) & \mb{0}
\end{array}
 \right)
$$
is also non-singular and we write its inverse as 
$$
\left( \begin{array}{ccc} 
\hat{\mb{P}}_{11}(\mb{x}) & \hat{\mb{P}}_{12}(\mb{x}) & \hat{\mb{P}}_{13}(\mb{x}) \\[4pt]
\hat{\mb{P}}_{21}(\mb{x}) & \hat{\mb{P}}_{22}(\mb{x}) & \hat{\mb{P}}_{23}(\mb{x}) \\[4pt]
\hat{\mb{P}}_{31}(\mb{x}) &\hat{\mb{P}}_{32}(\mb{x}) & \hat{\mb{P}}_{33}(\mb{x})
\end{array} \right) .
$$
Thus, if $\delta$ is sufficiently small and if $\mb{x} \in \mb{X}_n$, we can solve from the equation (\ref{eq:lag:mf1}) that
\begin{equation}
\left(
\begin{array}{c} 
\hat{\bs{\phi}} - \bs{\phi}^\ast \\[4pt]
\hat{\bs{\psi}} - \bs{\phi}^\ast \\[4pt]
\hat{\bs{\lambda}}
\end{array}
\right)
=
\left( \begin{array}{ccc} 
\hat{\mb{P}}_{11}(\mb{x}) & \hat{\mb{P}}_{12}(\mb{x}) & \hat{\mb{P}}_{13}(\mb{x}) \\[4pt]
\hat{\mb{P}}_{21}(\mb{x}) & \hat{\mb{P}}_{22}(\mb{x}) & \hat{\mb{P}}_{23}(\mb{x}) \\[4pt]
\hat{\mb{P}}_{31}(\mb{x}) &\hat{\mb{P}}_{32}(\mb{x}) & \hat{\mb{P}}_{33}(\mb{x})
\end{array} \right)
\left(
\begin{array}{c} 
\frac{1}{n} \mb{s}(\mb{x}, \bs{\phi}^\ast) \\[4pt]
\mb{0} \\[4pt]
\mb{0}
\end{array}
\right).
\label{eq:lag:mf2}
\end{equation}
Since the asymptotic distribution of $\mb{s}(\mb{x}, \bs{\phi}^\ast)/n$ is known, we can use the above relationship to induce the asymptotic distribution of $(\hat{\bs{\omega}}, \hat{\bs{\lambda}})$. However, this may only be valid for $\mb{x} \in \mb{X}_n$, and we need to extend it to also account for $\mb{x} \notin \mb{X}_n$.

Let $(\delta_m)$, $(\epsilon_m)$ be two decreasing sequences of positive real numbers, such that $\delta_1 < \mu_1/\kappa_3$,  $\epsilon_1 < 1$, and $\delta_m$ and $\epsilon_m$ both tend to 0 as $m$ goes to infinity. Define an increasing sequence ($n_m$) of integers such that, if $n \geq n_m$, there exists a set $\mb{X}_n$ with properties ($\mathcal{X}1$) - ($\mathcal{X}4$) for $\epsilon = \epsilon_m$ and $\delta = \delta_m$. For $m=1,2,\dots,$, if $n_m \leq n < n_{m+1}$, we choose a set $\mb{X}_n$ with properties ($\mathcal{X}1$) - ($\mathcal{X}4$) for $\epsilon = \epsilon_m$ and $\delta = \delta_m$. When $\mb{x} \in \mb{X}_n$, the equations  (\ref{eq:lag:e1}) - (\ref{eq:lag:e3}) have a solution $(\hat{\bs{\omega}}_n, \hat{\bs{\lambda}}_n)$ such that $||\hat{\bs{\omega}}_n - \bs{\omega}^\ast|| < \delta_m$, with  $\hat{\bs{\omega}}_n$ being the constrained maximum likelihood estimate for $\bs{\omega}$. Thus,  $\hat{\bs{\omega}}_n$ and $\hat{\bs{\lambda}}_n$ satisfy the equation (\ref{eq:lag:mf2}). When $\mb{x} \notin \mb{X}_n$, we define
$$
\left(
\begin{array}{c} 
\hat{\bs{\phi}}_n - \bs{\phi}^\ast \\[4pt]
\hat{\bs{\psi}}_n - \bs{\psi}^\ast \\[4pt]
\hat{\bs{\lambda}}_n
\end{array}
\right)
=
\left( \begin{array}{ccc} 
\mb{P}_{11} & \mb{P}_{12} & \mb{P}_{13} \\[4pt]
\mb{P}_{21} & \mb{P}_{22} & \mb{P}_{23} \\[4pt]
\mb{P}_{31} &\mb{P}_{32} & \mb{P}_{33}
\end{array} \right)
\left(
\begin{array}{c} 
\frac{1}{n} \mb{s}(\mb{x}, \bs{\phi}^\ast) \\[4pt]
\mb{0} \\[4pt] 
\mb{0}
\end{array}
 \right),
$$
where $\mb{P}_{ij}$, $i, j = 1, 2, 3$, are defined in the proof of Lemma \ref{thm:l2}. Note that the probability of $\mb{x} \notin \mb{X}_n$ goes to zero as $n$ goes to infinity. Thus, we have defined two sequences of random variables, $(\hat{\bs{\omega}}_n)$ and $(\hat{\bs{\lambda}}_n)$, $n=n_m, n_{m+1},  \dots$, which have the property that $\hat{\bs{\omega}}_n$  converges in probability to $\bs{\omega}^\ast$ as $n$ goes to infinity. Moreover, $\hat{\bs{\omega}}_n$ and $\hat{\bs{\lambda}}_n$ jointly satisfy the equations (\ref{eq:lag:e1}) - (\ref{eq:lag:e3}).

\begin{theorem} \label{thm:t2}
$$\sqrt{n} 
\left( \begin{array}{c}
\hat{\bs{\phi}}_n - \bs{\phi}^\ast \\[4pt]
\hat{\bs{\psi}}_n - \bs{\psi}^\ast \\[4pt]
\hat{\bs{\lambda}}_n
\end{array} \right)  
\stackrel{d}{\rightarrow}   
\mathcal{N}_{s+t} \left( 
\left( \begin{array}{c} 
\mb{0} \\[4pt]
\mb{0} \\[4pt] 
\mb{0}
\end{array} \right), ~
\left( \begin{array}{ccc} 
\mb{P}_{11} & \mb{P}_{12} & \mb{0} \\[4pt]
\mb{P}_{21} & \mb{P}_{22} & \mb{0} \\[4pt]
\mb{0} & \mb{0} & - \mb{P}_{33}
\end{array} \right)
\right).
$$
\end{theorem}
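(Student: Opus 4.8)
The plan is to read off the limit directly from the representation already established in (\ref{eq:lag:mf2}), together with its off-$\mb{X}_n$ extension: in both cases the left-hand side, scaled by $\sqrt n$, is a linear image of the normalized score $n^{-1/2}\mb{s}(\mb{x},\bs{\phi}^\ast)$, the coefficient matrix being (a perturbation of) the block inverse from Lemma \ref{thm:l2}. So first I would record that, under ($\mathcal{F}1$)--($\mathcal{F}4$), $n^{-1/2}\mb{s}(\mb{x},\bs{\phi}^\ast)\stackrel{d}{\to}\mathcal{N}_r(\mb{0},\mb{B}_{\bs{\phi}^\ast})$ by the central limit theorem, $\mb{B}_{\bs{\phi}^\ast}$ being the Fisher information, i.e.\ the limit of $-\mb{M}_{\mb{x},\bs{\phi}^\ast}/n$ from ($\mathcal{X}3$). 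Writing $\mb{e}_1$ for the $(s+t)\times r$ matrix $(\mb{I}_r,\ \mb{0},\ \mb{0})^{T}$, letting $\mb{G}$ denote the matrix of Lemma \ref{thm:l2} (so that $\mb{G}^{-1}$ has the blocks $\mb{P}_{ij}$) and $\hat{\mb{P}}(\mb{x})$ the inverse of its perturbed version appearing in (\ref{eq:lag:mf1}), both (\ref{eq:lag:mf2}) on $\mb{X}_n$ and the off-$\mb{X}_n$ definition can be written as
$$
\sqrt{n}\left(\begin{array}{c}\hat{\bs{\phi}}_n-\bs{\phi}^\ast\\[2pt]\hat{\bs{\psi}}_n-\bs{\psi}^\ast\\[2pt]\hat{\bs{\lambda}}_n\end{array}\right)=\mb{Q}_n(\mb{x})\left(\tfrac{1}{\sqrt n}\,\mb{s}(\mb{x},\bs{\phi}^\ast)\right),
$$
with $\mb{Q}_n(\mb{x})=\hat{\mb{P}}(\mb{x})\mb{e}_1$ when $\mb{x}\in\mb{X}_n$ and $\mb{Q}_n(\mb{x})=\mb{G}^{-1}\mb{e}_1$ (the first block column of $\mb{P}$) when $\mb{x}\notin\mb{X}_n$.

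Next I would show $\mb{Q}_n(\mb{x})\stackrel{p}{\to}\mb{G}^{-1}\mb{e}_1$. For $n_m\le n<n_{m+1}$ the perturbation matrices $\hat{\mb{b}},\hat{\mb{j}},\hat{\mb{j}}',\hat{\mb{k}},\hat{\mb{k}}'$ in (\ref{eq:lag:mf1}) have entries bounded by a quantity tending to $0$ with $\delta_m$, uniformly over $\mb{x}\in\mb{X}_n$, whence $\hat{\mb{P}}(\mb{x})=\mb{G}^{-1}+o(1)$ uniformly on $\mb{X}_n$ as $m\to\infty$; combined with $\delta_m\to0$ and $\mathrm{Pr}\{\mb{x}\notin\mb{X}_n\}\to0$ this gives the claimed convergence in probability. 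Since $n^{-1/2}\mb{s}(\mb{x},\bs{\phi}^\ast)=O_p(1)$, Slutsky's theorem then yields
$$
\sqrt{n}\left(\begin{array}{c}\hat{\bs{\phi}}_n-\bs{\phi}^\ast\\[2pt]\hat{\bs{\psi}}_n-\bs{\psi}^\ast\\[2pt]\hat{\bs{\lambda}}_n\end{array}\right)\stackrel{d}{\to}\mathcal{N}_{s+t}\!\left(\mb{0},\ \mb{P}\,(\mb{e}_1\mb{B}_{\bs{\phi}^\ast}\mb{e}_1^{T})\,\mb{P}\right),
$$
where I have used that $\mb{P}$ is symmetric (Lemma \ref{thm:l2}).

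The remaining step is to identify $\mb{P}\,(\mb{e}_1\mb{B}_{\bs{\phi}^\ast}\mb{e}_1^{T})\,\mb{P}$ with the matrix in the statement. Since $\mb{e}_1\mb{B}_{\bs{\phi}^\ast}\mb{e}_1^{T}$ is $\mb{B}_{\bs{\phi}^\ast}$ in its $(1,1)$ block and zero elsewhere, the $(i,j)$ block of the product is simply $\mb{P}_{i1}\mb{B}_{\bs{\phi}^\ast}\mb{P}_{1j}$, and I would check block by block that this equals $\mb{P}_{ij}$ for $i,j\in\{1,2\}$, equals $-\mb{P}_{33}$ for $i=j=3$, and vanishes otherwise. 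Each of these nine identities follows mechanically from the block relations contained in $\mb{P}\mb{G}=\mb{I}$ and $\mb{G}\mb{P}=\mb{I}$; for instance (suppressing the $\bs{\phi}^\ast,\bs{\omega}^\ast$ subscripts) $\mb{P}_{11}\mb{B}=\mb{I}_r+\mb{P}_{13}\mb{J}^{T}$, $\mb{P}_{21}\mb{B}=\mb{P}_{23}\mb{J}^{T}$, $\mb{P}_{31}\mb{B}=\mb{P}_{33}\mb{J}^{T}$, $\mb{P}_{13}\mb{K}^{T}=\mb{P}_{33}\mb{K}^{T}=\mb{0}$, $\mb{P}_{23}\mb{K}^{T}=-\mb{I}_{s-r}$, $\mb{J}^{T}\mb{P}_{11}=-\mb{K}^{T}\mb{P}_{21}$, $\mb{J}^{T}\mb{P}_{12}=-\mb{K}^{T}\mb{P}_{22}$, and $\mb{J}^{T}\mb{P}_{13}=-\mb{I}_t-\mb{K}^{T}\mb{P}_{23}$; substituting these and cancelling, the vanishing of $\mb{P}_{13}\mb{K}^{T}$ and $\mb{P}_{33}\mb{K}^{T}$ is what kills the cross blocks between $\bs{\lambda}$ and $\bs{\omega}$, while $\mb{P}_{23}\mb{K}^{T}=-\mb{I}_{s-r}$ together with the sign pattern turns the $\bs{\lambda}$ block into $-\mb{P}_{33}$. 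I expect this last block identification to be the only genuine work; the rest is exactly the Aitchison--Silvey limiting argument transported to the present $3\times3$ block layout, so it should go through with the obvious modifications.
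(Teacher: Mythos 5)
Your proposal is correct and follows essentially the same route as the paper: represent $\sqrt{n}(\hat{\bs{\phi}}_n-\bs{\phi}^\ast,\hat{\bs{\psi}}_n-\bs{\psi}^\ast,\hat{\bs{\lambda}}_n)$ as a random linear image of the normalized score, show the coefficient matrix converges in probability to the block inverse of Lemma \ref{thm:l2}, invoke the CLT and Slutsky, and then simplify the sandwich covariance $\mb{P}\,\mathrm{diag}(\mb{B}_{\bs{\phi}^\ast},\mb{0},\mb{0})\,\mb{P}$ to the stated block form. The only (cosmetic) difference is in that last simplification, where the paper substitutes the explicit formulas for the $\mb{P}_{ij}$ while you derive the needed identities directly from $\mb{P}\mb{G}=\mb{G}\mb{P}=\mb{I}$ --- your identities all check out and arguably make the verification cleaner.
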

\begin{proof}
 If $\mb{x} \notin \mb{X}_n$, we define $\hat{\mb{P}}_{ij}(\mb{x}) = \mb{P}_{ij}$, $i,j = 1,2,3$. Then, for sufficiently large $n$, we have
$$
\sqrt{n}\left(
\begin{array}{c} 
\hat{\bs{\phi}}_n - \bs{\phi}^\ast \\[4pt]
\hat{\bs{\psi}}_n - \bs{\psi}^\ast \\[4pt]
\hat{\bs{\lambda}}_n
\end{array}
\right)
=
\left( \begin{array}{ccc} 
\hat{\mb{P}}_{11}(\mb{x}) & \hat{\mb{P}}_{12}(\mb{x}) & \hat{\mb{P}}_{13}(\mb{x}) \\[4pt]
\hat{\mb{P}}_{21}(\mb{x}) & \hat{\mb{P}}_{22}(\mb{x}) & \hat{\mb{P}}_{23}(\mb{x}) \\[4pt]
\hat{\mb{P}}_{31}(\mb{x}) &\hat{\mb{P}}_{32}(\mb{x}) & \hat{\mb{P}}_{33}(\mb{x})
\end{array} \right)
\left(  \sqrt{n} \left( 
\begin{array}{c} 
\frac{1}{n} \mb{s}(\mb{x}, \bs{\phi}^\ast) \\[4pt]
\mb{0} \\[4pt]
\mb{0} 
\end{array}
\right) \right).
$$
Since $\hat{\mb{b}}(\mb{x})$, $\hat{\mb{j}}(\mb{x})$, $\hat{\mb{j}}^\ast(\mb{x})$, $\hat{\mb{k}}(\mb{x})$, and $\hat{\mb{k}}^\ast(\mb{x})$ all tend to $\mb{0}$ as $\delta \to 0$, it follows that the elements of 
$$
\left( \begin{array}{ccc} 
\hat{\mb{P}}_{11}(\mb{x}) & \hat{\mb{P}}_{12}(\mb{x}) & \hat{\mb{P}}_{13}(\mb{x}) \\[4pt]
\hat{\mb{P}}_{21}(\mb{x}) & \hat{\mb{P}}_{22}(\mb{x}) & \hat{\mb{P}}_{23}(\mb{x}) \\[4pt]
\hat{\mb{P}}_{31}(\mb{x}) &\hat{\mb{P}}_{32}(\mb{x}) & \hat{\mb{P}}_{33}(\mb{x})
\end{array} \right)
$$
converge in probability to the elements of 
$$
\left( \begin{array}{ccc} 
\mb{P}_{11} & \mb{P}_{12} & \mb{P}_{13} \\[4pt]
\mb{P}_{21} & \mb{P}_{22} & \mb{P}_{23} \\[4pt]
\mb{P}_{31} &\mb{P}_{32} & \mb{P}_{33}
\end{array} \right).
$$
Moreover, it is known that the asymptotic distribution of $\left( \mb{s}(\mb{x}, \bs{\phi}^\ast)/n \right)$ is normal with mean zero and asymptotic variance $\mb{B}_{\bs{\phi}^\ast}$. Thus, we have
$$
\sqrt{n} 
\left( \begin{array}{c} 
\frac{1}{n} \mb{s}(\mb{x}, \bs{\phi}^\ast) \\[4pt]
\mb{0} \\[4pt]
\mb{0} 
\end{array} \right)
\stackrel{d}{\rightarrow} 
\mathcal{N}_{s+t} \left(
\left( \begin{array}{c} 
\mb{0} \\[4pt] 
\mb{0} \\[4pt]
\mb{0}
\end{array} \right),
\left( \begin{array}{ccc} 
\mb{B}_{\bs{\phi}^\ast} & \mb{0} & \mb{0} \\[4pt]
\mb{0} & \mb{0} & \mb{0} \\[4pt]
\mb{0} & \mb{0} & \mb{0}
\end{array} \right)
\right).
$$
It then follows that the asymptotic distribution of $\sqrt{n}
\left( \hat{\bs{\phi}}_n - \bs{\phi}^\ast,  \hat{\bs{\psi}}_n - \bs{\psi}^\ast, 
\hat{\bs{\lambda}}_n \right)$ is
$$
\mathcal{N}_{s+t} \left(
\left( \begin{array}{c} 
\mb{0} \\[4pt]
\mb{0} \\[4pt]
\mb{0}
\end{array} \right),
\left( \begin{array}{ccc} 
\mb{P}_{11} & \mb{P}_{12} & \mb{P}_{13} \\[4pt]
\mb{P}_{21} & \mb{P}_{22} & \mb{P}_{23} \\[4pt]
\mb{P}_{31} &\mb{P}_{32} & \mb{P}_{33}
\end{array} \right)
\left( \begin{array}{ccc} 
\mb{B}_{\bs{\phi}^\ast} & \mb{0} & \mb{0} \\[4pt]
\mb{0} & \mb{0} & \mb{0} \\[4pt]
\mb{0} & \mb{0} & \mb{0}
\end{array} \right)
\left( \begin{array}{ccc} 
\mb{P}_{11} & \mb{P}_{12} & \mb{P}_{13} \\[4pt]
\mb{P}_{21} & \mb{P}_{22} & \mb{P}_{23} \\[4pt]
\mb{P}_{31} &\mb{P}_{32} & \mb{P}_{33}
\end{array} \right)^{T}
\right).
$$
Finally, using the expressions for $\mb{P}_{ij}$, $i, j = 1, 2, 3$, that were derived in the proof of Lemma \ref{thm:l2}, it can be verified that the asymptotic variance is
$$
\left( \begin{array}{ccc} 
\mb{P}_{11} & \mb{P}_{12} & \mb{P}_{13} \\[4pt]
\mb{P}_{21} & \mb{P}_{22} & \mb{P}_{23} \\[4pt]
\mb{P}_{31} &\mb{P}_{32} & \mb{P}_{33}
\end{array} \right)
\left( \begin{array}{ccc} 
\mb{B}_{\bs{\phi}^\ast} & \mb{0} & \mb{0} \\[4pt]
\mb{0} & \mb{0} & \mb{0} \\[4pt]
\mb{0} & \mb{0} & \mb{0}
\end{array} \right)
\left( \begin{array}{ccc} 
\mb{P}_{11} & \mb{P}_{12} & \mb{P}_{13} \\[4pt]
\mb{P}_{21} & \mb{P}_{22} & \mb{P}_{23} \\[4pt]
\mb{P}_{31} &\mb{P}_{32} & \mb{P}_{33}
\end{array} \right)^{T}
=
\left( \begin{array}{ccc} 
\mb{P}_{11} & \mb{P}_{12} & \mb{0} \\[4pt]
\mb{P}_{21} & \mb{P}_{22} & \mb{0} \\[4pt]
\mb{0} & \mb{0} & - \mb{P}_{33}
\end{array} \right).
$$
The result then follows. 
\end{proof}

\subsection{Numerical algorithm}

The solution of the equations (\ref{eq:lag:e1}) - (\ref{eq:lag:e3}), say $\hat{\bs{\omega}} = (\hat{\bs{\phi}}, \hat{\bs{\psi}})$, usually does not have a closed form, and thus must be computed numerically. We may immediately consider the Newton-Raphson method to solve the problem. However, that method requires the form of the Hessian matrix of $\mb{h}(\bs{\omega})$, which is an $s \times s$ matrix and may be very complicated, especially when $s$ is large. Thus, we follow the approach proposed by~\citet{AitchisonSilvey1958} and develop an algorithm that is easier to implement.

Suppose $\bs{\omega}^{(0)} = (\bs{\phi}^{(0)}, \bs{\psi}^{(0)})$ is an initial guess for $\hat{\bs{\omega}}$ such that $||\bs{\omega}^{(0)} - \hat{\bs{\omega}}||$ is small. Then we consider a first order of approximation to $\mb{s}(\mb{x}, \hat{\bs{\phi}})$  and $\mb{h}(\hat{\bs{\omega}})$:
\begin{align*}
\mb{s}(\mb{x}, \hat{\bs{\phi}}) &\approx \mb{s}(\mb{x}, \bs{\phi}^{(0)}) + \mb{M}_{\mb{x}, \bs{\phi}^{(0)}}(\hat{\bs{\phi}} - \bs{\phi}^{(0)}), \\[4pt]
\mb{h}(\hat{\bs{\omega}}) &\approx \mb{h}(\bs{\omega}^{(0)}) + \mb{J}_{\bs{\omega}^{(0)}}^{T}(\hat{\bs{\phi}} - \bs{\phi}^{(0)}) + \mb{K}_{\bs{\omega}^{(0)}}^{T}(\hat{\bs{\psi}} - \bs{\psi}^{(0)}).
\end{align*}
Also, we assume that $\hat{\bs{\lambda}}$ is close to $\mb{0}$ when $n$ is large. Then to a first order of approximation, we have
\begin{align*}
\mb{J}_{\hat{\bs{\omega}}}\hat{\bs{\lambda}} &\approx \mb{J}_{\bs{\omega}^{(0)}}\hat{\bs{\lambda}},  \\[4pt]
\mb{K}_{\hat{\bs{\omega}}}\hat{\bs{\lambda}} & \approx \mb{K}_{\bs{\omega}^{(0)}}\hat{\bs{\lambda}}.
\end{align*}
Since $\hat{\bs{\omega}}$ and $\hat{\bs{\lambda}}$ jointly satisfy the equations (\ref{eq:lag:e1}) - (\ref{eq:lag:e3}), they should also approximately satisfy
$$
\left(
\begin{array}{ccc} 
-\frac{1}{n}\mb{M}_{\mb{x}, \bs{\phi}^{(0)}} & \mb{0} & - \mb{J}_{\bs{\omega}^{(0)}}   \\[4pt]
\mb{0} & \mb{0} & - \mb{K}_{\bs{\omega}^{(0)}} \\[4pt]
- \mb{J}_{\bs{\omega}^{(0)}}^{T}  & - \mb{K}_{\bs{\omega}^{(0)}}^{T}  & \mb{0} 
\end{array}
 \right)
\left(
\begin{array}{c} 
\hat{\bs{\phi}} - \bs{\phi}^{(0)} \\[4pt] 
\hat{\bs{\psi}} - \bs{\psi}^{(0)} \\[4pt]
\hat{\bs{\lambda}}
\end{array}
\right)
\approx
\left(
\begin{array}{c} 
\frac{1}{n} \mb{s}(\mb{x}, \bs{\phi}^{(0)}) \\[4pt] 
\mb{0} \\[4pt]
\mb{h}(\bs{\omega}^{(0)})
\end{array}
 \right).
$$
When $n$ is large, $-\mb{M}_{\mb{x}, \bs{\phi}^{(0)}}/n$ should be close to $\mb{B}_{\bs{\phi}^{(0)}}$. Thus, we use $\mb{B}_{\bs{\phi}^{(0)}}$ to approximate $-\mb{M}_{\mb{x}, \bs{\phi}^{(0)}}/n$. Finally, we have the formula for updating $\bs{\omega}^{(0)}$, and in general for updating $\bs{\omega}^{(r-1)}$ in the $r$-th iteration,
$$
\left( \begin{array}{c}
\bs{\phi}^{(r)} \\[4pt]
\bs{\psi}^{(r)} \\[4pt]
\bs{\lambda}^{(r)}
\end{array} \right)  =
\left( \begin{array}{c} 
\bs{\phi}^{(r-1)} \\[4pt]
\bs{\psi}^{(r-1)} \\[4pt]
\mb{0}
\end{array} \right) +
\left(
\begin{array}{ccc} 
\mb{B}_{\bs{\phi}^{(r-1)}} & \mb{0} & - \mb{J}_{\bs{\omega}^{(r-1)}}   \\[4pt]
\mb{0} & \mb{0} & - \mb{K}_{\bs{\omega}^{(r-1)}} \\[4pt]
- \mb{J}_{\bs{\omega}^{(r-1)}}^{T}  & - \mb{K}_{\bs{\omega}^{(r-1)}}^{T}  & \mb{0} 
\end{array}
\right) ^{-1}
\left( \begin{array}{c} 
\frac{1}{n} \mb{s}(\mb{x}, \bs{\phi}^{(r-1)}) \\[4pt]
\mb{0} \\[4pt]
\mb{h}(\bs{\omega}^{(r-1)})
\end{array}
 \right).
$$
If the sequence $ \left\{ ( \bs{\omega}^{(r)}, \bs{\lambda}^{(r)} ) \right\}$ converges, then it converges to a solution of the equation (\ref{eq:lag:e1}) - (\ref{eq:lag:e3}). Finally, it should be noted that $\bs{\lambda}^{(r-1)}$ is actually missing from the right hand side of the above equation. Thus, the updating procedure only needs to store the current value of $\bs{\omega} = (\bs{\phi}, \bs{\psi})$ for the next iteration. 

\section{Example problem and simulation study} \label{sec:example}

In this section, we use the proposed method to solve a missing data problem, where parameters associated with the missing mechanism may only be identified with additional assumptions. This sort of problem might otherwise be tackled with an expectation-maximization algorithm. More specifically, consider a binary response variable $Y$ and two binary explanatory variables $X_1$ and $X_2$. The probability of having $Y=1$ given $(X_1, X_2)$ is assumed to be determined through a logistic model:
$$
\mathrm{logit} Pr(Y=1|X_1, X_2) = \beta_0 + \beta_1 X_1 + \beta_2 X_2 + \beta_3 X_1 X_2
$$
Suppose we can observe $X_1$ and $X_2$ for everyone sampled, but the status of $Y$ is missing for some people. Let $R$ indicate missingness. The data structure is displayed in Table 1, where $n_{ijk}$ is the number of subjects with complete data of $(Y=i, X_1 = j, X_2 = k, R=1)$, and $m_{jk}$ is the number of subjects with incomplete data of $(X_1 = j, X_2 = k, R=0)$, $i, j, k = 0, 1$. The corresponding cell probabilities, as enclosed in parentheses in the Table \ref{tab:example}, are 
\begin{align*}
r_{ijk} &= Pr(Y=i, X_1 = j, X_2 = k, R=1), \\[4pt]
s_{jk} &= Pr(X_1 = j, X_2 = k, R=0),
\end{align*}
for $i, j, k = 0, 1$. Based on Table \ref{tab:example}, the log-likelihood of data is:
$$
\ell = \sum_{i,j,k} n_{ijk} \log r_{ijk}  + \sum_{j, k} m_{jk} \log s_{jk} .
$$
In order to understand the relationship between $Y$ and $(X_1, X_2)$, we need to infer the proportions of subjects with $Y=1$ among the groups of incomplete data
$$
t_{jk} = Pr(Y=1| X_1 = j, X_2 = k, R=0),
$$
for $j, k = 0, 1$. However, these quantities are not identifiable from data without additional assumptions.
\begin{table}[h]
\begin{center} 
 {\tabcolsep=18pt
 \begin{tabular}{@{}lccc@{}}
 \hline \\[-9pt]
& $Y=0, ~R=1$ & $Y=1, ~R=1$ & $Y=?, ~R=0$ \\ \\[-9pt] 
$X_1=0,~X_2=0, $ & $n_{000}$ ($r_{000}$) & $n_{100}$ ($r_{100}$) & $m_{00}$  ($s_{00}$)\\[4pt]
$X_1=1,~X_2=0, $ & $n_{010}$ ($r_{010}$) & $n_{110}$ ($r_{110}$) & $m_{10}$  ($s_{10}$)\\[4pt]
$X_1=0,~X_2=1, $ & $n_{001}$ ($r_{001}$) & $n_{101}$ ($r_{101}$) & $m_{01}$  ($s_{01}$)\\[4pt]
$X_1=1,~X_2=1, $ & $n_{011}$ ($r_{011}$) & $n_{111}$ ($r_{111}$) & $m_{11}$  ($s_{11}$)\\
\\[-9pt] \hline
\end{tabular}}
\caption{Data structure for the example problem considered in Section \ref{sec:example}. } 
\label{tab:example}
\end{center}
\end{table}

Now, we make two assumptions. First, we assume that the status of $Y$ is missing at random, i.e., $R$ and $Y$ are conditionally independent given $(X_1, X_2)$. This assumption imposes four constraints on parameters, and implies 
$$
\log t_{jk}  - \log ( 1 - t_{jk} )  = \log r_{1jk} - \log r_{0jk},
$$
for $j, k = 0, 1$. Secondly, we assume that the effects of $X_1$ and $X_2$ on $Y$ are additive on the logit scale, which means that the interaction effect $\beta_3$ is zero. This assumption introduces one more constraint on parameters as
$$
\log \frac{\left(r_{100} + s_{00}t_{00}\right)\left(r_{111} + s_{11}t_{11}\right)}{\left(r_{101} + s_{01}t_{01}\right)\left(r_{110} + s_{10}t_{10}\right)}
= 
\log \frac{\left(r_{000} + s_{00}(1-t_{00})\right)\left(r_{011} + s_{11}(1-t_{11})\right)}{\left(r_{001} + s_{01}(1-t_{01})\right)\left(r_{010} + s_{10}(1-t_{10})\right)}.
$$
Under these two assumptions, we can apply the proposed method to obtained the maximum likelihood estimates $\hat{r}_{ijk}$, $\hat{s}_{jk}$, and $\hat{t}_{jk}$, $i, j, k = 0,1$, subject to the above five constraints. Next, the constrained maximum likelihood estimates for the main effects of $X_1$ and $X_2$ can be deduced through
\begin{align*}
\hat{\beta}_1 &= 
\log \frac{\hat{r}_{110} + \hat{s}_{10}\hat{t}_{10}}{\hat{r}_{100} + \hat{s}_{00}\hat{t}_{00}} - 
\log \frac{\hat{r}_{010} + \hat{s}_{10}(1-\hat{t}_{10})}{\hat{r}_{000} + \hat{s}_{00}(1-\hat{t}_{00})}, \\
\hat{\beta}_2 &= 
\log \frac{\hat{r}_{101} + \hat{s}_{01}\hat{t}_{01}}{\hat{r}_{100} + \hat{s}_{00}\hat{t}_{00}} - 
\log \frac{\hat{r}_{001} + \hat{s}_{01}(1-\hat{t}_{01})}{\hat{r}_{000} + \hat{s}_{00}(1-\hat{t}_{00})},
\end{align*}
and the corresponding estimated variances can be obtained by the delta method. 

Finally, based on the above problem, we conduct a simulation study to illustrate the performance of the proposed method.  In particular, we randomly generate 10000 datasets of size 1000 under the parameter setting $\beta_0 = \mathrm{logit}~0.1$, $\beta_1 = \log 2$, $\beta_2 = \log 3$, $\beta_3 = 0$, and
$$
\begin{array}{c}
Pr(X_1=0, X_2=0) = 0.4, \quad Pr(R=0|X_1=0, X_2=0) = 0.2,~~ \\[4pt]
Pr(X_1=1, X_2=0) = 0.3, \quad Pr(R=0|X_1=1, X_2=0) = 0.1,~~ \\[4pt]
Pr(X_1=0, X_2=1) = 0.2, \quad Pr(R=0|X_1=0, X_2=1) = 0.05, \\[4pt]
Pr(X_1=1, X_2=1) = 0.1, \quad Pr(R=0|X_1=1, X_2=1) = 0.05.
\end{array}
$$
For each dataset, we apply the proposed method to obtain the constrained maximum likelihood estimates for $\hat{\beta}_1$ and $\hat{\beta}_2$, and the associated $95\%$ confidence intervals. Our simulation results show that the empirical biases for the estimators of $\beta_1$ and $\beta_2$ are $0.0033$ and $0.0029$, respectively. Correspondingly, the coverage probabilities of the $95\%$ confidence intervals are $95.1\%$ and $95.2\%$, which match well with the nominal level. We can see that the proposed method performs very well. 

%

\section{Just- and over-identified Situations}

In the previous section, we have considered a partially identified model with four non-identifiable parameters and made additional assumptions that impose five constraints on parameters. Consequently, the constrained maximum likelihood estimators for the identifiable parameters,  $r_{ijk}$'s and $s_{jk}$'s, $i, j, k = 0, 1$, differ from their unconstrained estimators. More importantly, comparing to the unconstrained estimators, the constrained estimators are associated with smaller variances. For example, under the parameter setting considered in the previous section, the asymptotic distribution of the the unconstrained estimator for $(r_{000}, \dots, r_{111})$ is 
$$
\left(
\begin{array}{rrrrrrrr}
\mb{0.205} & -0.06 & -0.04 & -0.02 & -0.01 & -0.01 & -0.01 & -0.01 \\[4pt]
-0.06 &  \mb{0.172} & -0.03 & -0.01 & -0.01 & -0.01 & -0.01 & -0.01 \\[4pt]
-0.04 & -0.03 &  \mb{0.122} & -0.01 & -0.01 &  -0.01 &  -0.01 & -0.01 \\[4pt]
-0.02 & -0.01 & -0.01 &  \mb{0.054} &  -0.00 & -0.00 & -0.00 &  -0.00 \\[4pt]
-0.01 & -0.01 & -0.01 &  -0.00 &  \mb{0.031} &  -0.00 &  -0.00 & -0.00 \\[4pt]
-0.01 & -0.01 &  -0.01 & -0.00 &  -0.00 &  \mb{0.047} & -0.00 &  -0.00 \\[4pt]
-0.01 & -0.01 &  -0.01 & -0.00 &  -0.00 & -0.00 &  \mb{0.045} &  -0.00 \\[4pt]
-0.01 & -0.01 & -0.01 &  -0.00 & -0.00 &  -0.00 &  -0.00 &  \mb{0.037}
\end{array}
\right), 
$$
and the asymptotic variance of the corresponding constrained estimator is 
$$
\left(
\begin{array}{rrrrrrrr}
\mb{0.197} & -0.06 & -0.03 & -0.02 & -0.00 & -0.02 & -0.02 & -0.00 \\[4pt]
-0.06 &  \mb{0.165} & -0.04 & -0.01 & -0.02 & -0.00 & -0.00 & -0.02 \\[4pt]
-0.03 & -0.04 &  \mb{0.115} & -0.00 & -0.01 &  0.00 &  0.00 & -0.01 \\[4pt]
-0.02 & -0.01 & -0.00 &  \mb{0.046} &  0.01 & -0.01 & -0.01 &  0.01 \\[4pt]
-0.00 & -0.02 & -0.01 &  0.01 &  \mb{0.023} &  0.01 &  0.01 & -0.01 \\[4pt]
-0.02 & -0.00 &  0.00 & -0.01 &  0.01 &  \mb{0.039} & -0.01 &  0.01 \\[4pt]
-0.02 & -0.00 &  0.00 & -0.01 &  0.01 & -0.01 &  \mb{0.038} &  0.01 \\[4pt]
-0.00 & -0.02 & -0.01 &  0.01 & -0.01 &  0.01 &  0.01 &  \mb{0.029}
\end{array}
\right).
$$
By comparing the elements along the diagonal of these two matrices, it is clear that the constrained estimator is more efficient than the unconstrained estimator for the problem considered in the previous section. 

However, if we only make the missing at random assumption and allow the model for $Y|X_1, X_2$ to be saturated, then we have only four constraints for four non-identifiable parameters. In this case, we find that the constrained and unconstrained maximum likelihood estimators for the identifiable parameters always coincide and have the same asymptotic distribution. Thus, making the missing at random assumption alone leads to no efficiency gain. 

Generally, we say that the parameters are over-identified when the number of constraints is greater than the number of unidentified parameters. In this case, the constrained  maximum likelihood estimator differs from the unconstrained estimator and achieves better efficiency. On the other hand, we say that the parameters are just-identified when the number of constraints is equal to the number of unidentified parameters. If that is the case, the constrained estimator will coincide with the unconstrained estimator, at least asymptotically. Moreover, identifying the unidentified parameters uses up the information provided by the additional constraints and thus an more efficient estimator is not available. This phenomena was also observed by~\citet{ChenChen2011} in the context of a gene-environment independence problem.

\section{Discussion}

Parameters arising from a partially identified model can be estimated when we have enough equality constraints enforced by additional assumptions. The constrained maximum likelihood estimate for the identified part may or may not coincide with its unconstrained counterpart.  When they do not coincide, the constrained version will have a lower estimated variance.

Another possibility for estimating parameters of a partially identified model subject to constraints is to exploit a reduced-form parameterization that is free of constraints. However, the capability of such approach is limited, as a closed form for a reduced-form parameterization is often very complicated or even sometimes not available. In contrast, the method presented in this paper is applicable in more general settings. Moreover, since the log-likelihood function is usually expressed in its simplest form with a transparent re-parameterization, taking the second partial derivatives of the log-likelihood function becomes much more straightforward. Thus, the proposed method is also advantageous in terms of calculation. 

Finally, the proposed method assumes that the partially identified model can be understood through a transparent re-parameterization that separates the identifiable parameters from non-identifiable parameters. Unfortunately, such kind of re-parameterization does not always exist.~\citet{Gustafson2009} gives two examples that do not admit a transparent re-parameterization. In that case, the proposed method may not be applicable. 

\bibliography{reference}
%
%
%
%
%
%
%
%
%
%
%

\end{document}